\numberwithin{equation}{section}
\theoremstyle{plain}
\newtheorem{thm}{Theorem}[section]
\newtheorem{prop}[thm]{Proposition}
\newtheorem{cor}[thm]{Corollary}
\newtheorem{lem}[thm]{Lemma}
\theoremstyle{definition}
\newtheorem{exa}[thm]{Example}
\newtheorem{rem}[thm]{Remark}
\newtheorem{defi}[thm]{Definition}
\theoremstyle{definition}
\begin{document}
\title[Cumulants for finite free convolution]
{Cumulants for finite free convolution}

\author{Octavio Arizmendi}
\author{Daniel Perales }

\email{octavius@cimat.mx, daniel.perales@cimat.mx}

\date{\today}
\begin{abstract}
In this paper we define cumulants for finite free convolution. We give a moment-cumulant formula and show that these cumulants satisfy desired properties: they are additive with respect to finite free convolution and they approach free cumulants as the dimension goes to infinity.

\end{abstract}

\maketitle

\section{Introduction}

Since the original paper of Voiculescu \cite{Voi91}, where he discovered \emph{asymptotic freeness}, Free Probability has given answers to many questions in random matrix theory in the  \emph{asymptotic regime}, see for example \cite{ANV, BSTV, BMS, BF, CDM, CDMFF, Sh}.  Recently, Marcus, Spielman, and Srivastava \cite{MSSd} found a connection between polynomial convolutions and addition of random matrices, which in the limit is related to free probability. The key idea is that instead of looking at distributions of eigenvalues of random matrices, one looks at the (expected) characteristic polynomial of a random matrix.

To be precise, for a matrix $M$, let $
\chi_{M}(x) = \det (xI - M)
$ be  the characteristic polynomial of the matrix $M$. For $d$-dimensional hermitian matrices  $A$ and $B$  with characteristic polynomials $p$ and $q$, respectively, one defines the \textit{finite free additive convolution} of $p$ and $q$ to be
\begin{equation*}
  p (x) \boxplus_{d} q (x) = \textbf{E}_Q [\chi_{A + Q B Q^{T}}(x)],
\end{equation*}
where the expectation is taken over orthonormal matrices $Q$ sampled according to the Haar measure. The convolution does not depend on the specific choice of $A$ and $B$, but only on $p$ and $q$.

The connection with free probability is that, as $d\to\infty$,  this polynomial convolution approximates  free additive convolution. The connection is quite remarkable since, as proved in \cite{MSSd}, this convolution has appeared before and there are very explicit formulas to calculate the coefficients of the finite free convolution of two polynomials. Indeed, for $p(x) = \sum_{i=0}^d x^{d-i} (-1)^i  a^p_i$ and  $q(x) = \sum_{i=0}^d x^{d-i} (-1)^i  a^q_i$, the finite free convolution of $p$ and $q$ is given by
\begin{equation*}  p (x) \boxplus_{d} q (x) = \sum_{k=0}^d x^{d-k} (-1)^k  \sum_{i+j=k} \frac{(d-i)!(d-j)!}{d! (d-i-j)!} a^p_i a^q_j.
\end{equation*}

A very successful approach to free probability is the combinatorial one developed by Speicher \cite{Sp94} based on free cumulants. He showed that the combinatorial structure of free cumulants is governed by non-crossing partitions. Hence, understanding the combinatorics on non-crossing partitions gave insight in many cases where analytic expressions could not be found.

 In this paper we want to give a similar combinatorial treatment to finite free additive convolution. Our main contribution is describing cumulants for finite free additive convolution, by deriving moment-cumulant formulas. We will give alternative proofs of some of the results presented in \cite{Mar}. Our approach has the advantage that it does not  involve using random matrices and avoids the analytic machinery. Hence, this approach gives a different, combinatorial, understanding of finite free additive convolution.  We also prove that these cumulants approximate free cumulants, when the degree of the polynomials tends to infinity.

Apart from this introduction, this paper is organized in five sections. In Section 2, we recall the preliminaries of non-crossing partitions, incidence algebras and free convolution. The main results are divided in Sections 3, 4 and 5. In Section 3 we introduce finite free cumulants and prove that they are additive. In Section 4 we give a moment-cumulant formula. In Section 5 we show that finite free cumulants approach free cumulants. Finally, in Section 6 we give some applications and examples.

\section{Preliminaries}

In this section we give the necessary preliminaries on incidence algebras, partitions, non crossing partitions, and free cumulants. For a detailed account of the combinatorial theory of free probability we refer the reader to the monograph of Nica and Speicher \cite{NiSp}.

\subsection{Posets}
We will be concerned with the set of partitions, seen as a poset and with functions in the incidence algebra of this poset. For details, the standard reference is \cite{Stan12}.

Incidence algebras were first introduced and systematically studied by Rota \cite{Rota} in order to generalize the Inclusion-Exclusion Principle. The incidence algebra $I(P,\mathbb{C})$ of a finite poset $(P,\leq)$ consists of all functions $f:P^{(2)}\rightarrow \mathbb{C}$ such that $f(\pi, \sigma )=0$ whenever $\pi \nleq \sigma $. We can also consider functions of one variable; these are restrictions of functions of two variables as above to the case where the first argument is equal to $0$, i.e. $f(\pi )=f(0,\pi )$ for $\pi \in P$.

We endow $I(P,\mathbb{C})$ with the usual structure of vector space over $\mathbb{C}.$ On this incidence algebra we have a canonical multiplication or (combinatorial) convolution 
 defined by

\begin{equation*}
(F\ast G)(\pi,\sigma ):=\sum\limits_{\substack{ \rho \in P  \\ \pi \leq \rho \leq \sigma 
}}F(\sigma, \rho )G(\rho ,\sigma )\text{.}
\end{equation*}
Also, for functions $f:P\rightarrow \mathbb{C}$ and $G:P^{(2)}\rightarrow 
\mathbb{C}$ we consider the convolution $f\ast G:P\rightarrow $ defined by
\begin{equation*}
(f\ast G)(\sigma ):=\sum\limits_{\substack{ \rho \in P \\ \rho \leq \sigma 
}}f(\rho )G(\rho ,\sigma )\text{.}
\end{equation*}

One of the most important properties of incidence algebras is the following inversion principle, which generalizes the well-known M\"obius inversion in $\mathbb{N}$. 
\begin{prop}[Principle of M\"obius inversion]
  \label{prop:moebius}
  On any poset $(P,\leq)$ there is a unique \emph{M\"obius function}
  $\mu:P\times P \to \mathbb{Z}$ such that for any pair of functions $f,g:P\to \mathbb{C}$
 the identity
  \begin{equation}
    \label{eq:fx=sumgy}
  f(x)= \sum_{y\leq x} g(y)
  \end{equation}
  holds for every $x\in P$ if and only if
  \begin{equation}
  g(x)= \sum_{y\leq x} f(y)\,\mu(y,x). 
  \end{equation}
  In particular, if, given $f$, two functions $g_1$ and $g_2$ satisfy
  \eqref{eq:fx=sumgy}, then $g_1$ and $g_2$ coincide.
\end{prop}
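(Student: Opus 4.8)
The plan is to recast the two identities as equations in the incidence algebra $I(P,\mathbb{C})$ and then invert. Introduce the \emph{zeta function} $\zeta\in I(P,\mathbb{C})$, with $\zeta(\pi,\sigma)=1$ if $\pi\leq\sigma$ and $0$ otherwise, and the \emph{delta function} $\delta$, with $\delta(\pi,\sigma)=1$ if $\pi=\sigma$ and $0$ otherwise. One checks directly that $\delta$ is a two-sided unit for $\ast$ and that $\ast$ is associative, including in the mixed form $(h\ast F)\ast G=h\ast(F\ast G)$ for a one-variable $h$ and $F,G\in I(P,\mathbb{C})$ (standard; see \cite{Stan12}). With this notation, ``$f(x)=\sum_{y\leq x}g(y)$ for every $x$'' reads exactly $f=g\ast\zeta$, and ``$g(x)=\sum_{y\leq x}f(y)\,\mu(y,x)$ for every $x$'' reads exactly $g=f\ast\mu$. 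So the Proposition amounts to: $\zeta$ admits a two-sided inverse $\mu$ in $I(P,\mathbb{C})$, this $\mu$ is integer-valued, and it is unique.

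For existence I would produce $\mu$ by recursion on the length of intervals: set $\mu(x,x)=1$; set $\mu(x,y)=0$ if $x\nleq y$; and set $\mu(x,y)=-\sum_{x\leq z<y}\mu(x,z)$ if $x<y$. Finiteness of $P$ makes this well defined, and the recursion plainly keeps all values in $\mathbb{Z}$, so $\mu:P\times P\to\mathbb{Z}$. By construction $\sum_{x\leq z\leq y}\mu(x,z)=\delta(x,y)$, that is, $\mu\ast\zeta=\delta$; the mirror-image downward recursion produces $\tilde\mu$ with $\zeta\ast\tilde\mu=\delta$, and then $\mu=\mu\ast\delta=\mu\ast(\zeta\ast\tilde\mu)=(\mu\ast\zeta)\ast\tilde\mu=\delta\ast\tilde\mu=\tilde\mu$ shows $\mu$ is a genuine two-sided inverse of $\zeta$. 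Both directions of the stated equivalence are then formal: if $f=g\ast\zeta$ then $f\ast\mu=(g\ast\zeta)\ast\mu=g\ast(\zeta\ast\mu)=g\ast\delta=g$, and if $g=f\ast\mu$ then $g\ast\zeta=(f\ast\mu)\ast\zeta=f\ast(\mu\ast\zeta)=f$. In particular any $g$ satisfying $f(x)=\sum_{y\leq x}g(y)$ equals $f\ast\mu$, which yields the final sentence of the Proposition.

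For uniqueness, suppose $\mu'$ also has the stated property, and apply the implication $f=g\ast\zeta\Rightarrow g=f\ast\mu'$ to the one-variable indicators $g=\delta(z,\cdot)$: here $f=g\ast\zeta$ satisfies $f(x)=\zeta(z,x)$, so the conclusion $g=f\ast\mu'$ unwinds to $(\zeta\ast\mu')(z,x)=\delta(z,x)$ for all $z,x$, i.e. $\zeta\ast\mu'=\delta$. Thus $\mu'$ is a right inverse of $\zeta$, and a one-line computation $\mu'=\delta\ast\mu'=(\mu\ast\zeta)\ast\mu'=\mu\ast(\zeta\ast\mu')=\mu\ast\delta=\mu$ finishes it.

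The only step carrying real content is the invertibility of $\zeta$ — checking that the two recursions are consistent and deliver a true two-sided inverse supported on comparable pairs; everything else is bookkeeping with $\ast$. In practice the points to watch are matching the paper's conventions (one- versus two-variable convolution, and the order of arguments in the definition of $\ast$) and invoking associativity in precisely the mixed form used above; finiteness of $P$ is exactly what makes every sum finite and every recursion terminate, and no further hypothesis on the poset is needed.
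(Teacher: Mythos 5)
Your proof is correct. Note that the paper itself gives no proof of this Proposition: it is quoted as the classical M\"obius inversion principle, with the standard references (Rota, Stanley) implicitly supplying the argument, and what you wrote is exactly that standard argument --- $\zeta$ and $\delta$ in the incidence algebra, the interval recursion producing an integer-valued left inverse $\mu$ of $\zeta$, the mirror recursion plus associativity upgrading it to a two-sided inverse, and then both implications and uniqueness falling out formally (your trick of feeding the indicator functions $\delta(z,\cdot)$ into the hypothesized inversion property to force $\zeta\ast\mu'=\delta$ is a clean way to get uniqueness of $\mu$ itself, which is slightly stronger than the paper's closing remark that $g$ is unique given $f$). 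Two small points worth keeping explicit: the Proposition as stated says ``any poset,'' but your sums and recursions need finiteness (finiteness of the principal order ideals, or of intervals, would do), which is harmless here since the paper only ever applies this to the finite posets $\mathcal{P}(n)$ and $\mathcal{NC}(n)$; and you are right to flag the conventions --- the paper's displayed definition of $F\ast G$ contains an obvious typo ($F(\sigma,\rho)$ should read $F(\pi,\rho)$), and your translation $f=g\ast\zeta$, $g=f\ast\mu$ uses the intended convention correctly.
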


A chain is a totally ordered subset of a poset. We say that $P$ has a \emph{minimum} if there exists an element  $\hat 0\in P$ such that $t\geq \hat 0$ for all $t \in P$. Similarly, $P$ has a \emph{maximum} if there exists $\hat 1\in P$ such that $t \leq \hat 1$ for all $t\in P$.

A chain (or totally ordered set or linearly ordered set) is a poset in which any two elements are comparable. A subset $C$ of a poset $P$ is
called a chain if $C$ is a chain when regarded as a subposet of $P$. The chain $C$ of $P$ is called saturated
(or unrefinable) if there does not exist $u\in P - C$ such that $s < u < t$ for some $s, t \in C$
and such that $C \cup {u}$ is a chain. A poset $P$ with minimum $\hat 0$, is ranked if, for each $x\in P$, every saturated
$\hat 0-x$ chain has the same length. 

Given a ranked poset, we get a rank function
$rk: P\to \mathbb{N}$ defined by setting $rk(x)$ to be the length of a $\hat 0-x$ saturated chain.
We define the rank of a ranked poset $P$ to be $$rk(P)=\max_{x\in P} rk(x).$$

\begin{defi}[Characteristic polynomial of a ranked poset]
The characteristic polynomial of a poset $P$ with rank $rk: P\to \mathbb{N}$ is defined by 
$$\chi(P,t):=\sum_{\pi \in \mathcal{P}(k)} \mu (0, \pi) t^{rk(P)-rk(x)}.$$
\end{defi}

\subsection{Set Partitions}
Now, we specialize in partitions and non crossing partitions.

\begin{defi}

(1) We call $\pi =\{V_{1},...,V_{r}\}$ a \textbf{partition }of the set $[n]:=\{1, 2,\dots, n\}$
if and only if $V_{i}$ $(1\leq i\leq r)$ are pairwise disjoint, non-void
subsets of $S$, such that $V_{1}\cup V_{2}...\cup V_{r}=\{1, 2,\dots, n\}$. We call $
V_{1},V_{2},\dots,V_{r}$ the \textbf{blocks} of $\pi $. The number of blocks of 
$\pi $ is denoted by $\left\vert \pi \right\vert $.

(2) A partition $\pi =\{V_{1},...,V_{r}\}$ is called \textbf{non-crossing} if for all $1 \leq a < b < c < d \leq n$
if $a,c\in V_{i}$ then there is no other subset $V_{j}$ with $j\neq i$ containing $b$ and $d$.

\end{defi}

We will denote the set of partitions  of $[n]$ by $\mathcal{P}(n)$ and the set of non-crossing partitions of $[n]$ by $\mathcal{NC}(n)$.

Many statistics of partitions and non-crossing partitions are known. In particular, the following will be useful for us while comparing finite free cumulants with free cumulants.

\begin{lem}
\label{type}Let $r_{1},r_{2},...r_{n}$ be nonnegative integers such that $r_{1}+2r_{2}+\cdots+nr_{n}=n$. Denote by $p_r=r_1!r_2!\cdots r_n!$ and $r_{1}+r_{2}+\cdots+r_{n}=m.$  Then 
\begin{enumerate} \item The number of
partitions of $\pi $ in $\mathcal{NC}(n)$ with $r_{1}$ blocks of size $1$, $r_{2}$
blocks of size $2$ ,$\dots$, $r_{n}$ blocks of size $n$ equals
\begin{equation}
\frac{n!}{p_r(n-m+1)!}.
\label{rtype partitions}
\end{equation}
\item The number of partitions of $\pi $ in $\mathcal{P}(n)$ with $r_{1}$ blocks of size $1$, $r_{2}$
blocks of size $2$ ,$\dots$, $r_{n}$ blocks of size $n$ equals
\begin{equation}
\frac{n!}{p_r (2!)^{{r_{2}}}(3!)^{{r_{3}}}\cdots (n!)^{{r_{n}}}}.
\label{rtype partitions 2}
\end{equation}
\end{enumerate}
\end{lem}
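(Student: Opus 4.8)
The plan is to prove both formulas by the standard device of counting ordered structures and dividing by the automorphisms of the unordered data. Write $m = r_1 + \cdots + r_n$ for the total number of blocks. The key observation for part (2) is the classical multinomial argument: to build a partition of $[n]$ with $r_j$ blocks of size $j$, first distribute the $n$ labelled elements into a sequence of $m$ labelled boxes of the prescribed sizes, which can be done in $n! / \bigl((1!)^{r_1}(2!)^{r_2}\cdots (n!)^{r_n}\bigr)$ ways (the $(j!)^{r_j}$ in the denominator corrects for the order within each block of size $j$), and then divide by $r_1! r_2! \cdots r_n! = p_r$ to account for the fact that the $r_j$ blocks of the same size $j$ are interchangeable. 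Since $(1!)^{r_1} = 1$, this yields exactly \eqref{rtype partitions 2}. I would present this in one short paragraph, as it is entirely routine.

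For part (1), the plan is to use the known bijective/enumerative structure of $\mathcal{NC}(n)$. The cleanest route is to invoke the formula of Kreweras for the number of non-crossing partitions of a fixed type: the number of $\pi \in \mathcal{NC}(n)$ with exactly $r_j$ blocks of size $j$ equals
\[
\frac{n!}{(n-m+1)!\, r_1!\, r_2! \cdots r_n!} = \frac{n!}{p_r (n-m+1)!},
\]
which is precisely \eqref{rtype partitions}. If one wishes to keep the paper self-contained, this can be derived from the fact that $|\mathcal{NC}(n)| $ with $m$ blocks is the Narayana number $N(n,m) = \frac1n\binom{n}{m}\binom{n}{m-1}$, refined by a cycle-lemma / rotation argument on lattice paths, or alternatively from the bijection between $\mathcal{NC}(n)$ and certain labelled trees; I would cite Nica--Speicher \cite{NiSp} or Kreweras for the refined count and then simply observe that it matches the claimed expression. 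The main (and essentially only) obstacle is bookkeeping: making sure the ``type'' data $(r_1,\dots,r_n)$ is correctly translated into the parameters appearing in Kreweras's formula, and checking the edge cases (e.g. $m = n$ forces all blocks singletons, giving one partition, consistent with $n!/(p_r \cdot 1!)$ since then $r_1 = n$ and $p_r = n!$).

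I expect no genuine difficulty here; the lemma is a compilation of standard facts, and the proof is a matter of correctly citing or rederiving the type-refined enumeration of $\mathcal{P}(n)$ and $\mathcal{NC}(n)$ and simplifying the factorials. The one point that deserves a sentence of care is why the denominator $(n-m+1)!$ appears for non-crossing partitions but is replaced by $(2!)^{r_2}\cdots(n!)^{r_n}$ for all partitions — this reflects the fact that within a non-crossing block the cyclic order is forced, whereas an arbitrary block of size $j$ can be filled in $j!$ ways; I would make this contrast explicit so the reader sees the two formulas as parallel.
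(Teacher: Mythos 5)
Your argument is correct. Note that the paper itself offers no proof of this lemma at all: it is stated as a known statistic (the second count is the classical multinomial formula for set partitions by type, the first is Kreweras' type-refined enumeration of non-crossing partitions, available e.g.\ in Nica--Speicher \cite{NiSp}), so your write-up simply supplies the standard justification the authors take for granted. Your treatment of part (2) — distribute the $n$ labelled elements into an ordered list of blocks of prescribed sizes and divide by $p_r$ for the interchangeability of equal-sized blocks — is exactly the usual derivation, and citing Kreweras for part (1) is legitimate; if you want it self-contained, the cleanest route is the observation that the count equals $\frac{1}{n+1}\binom{n+1}{r_1,\ldots,r_n,\,n+1-m}$, which follows from a cycle-lemma argument on $n+1$ points and simplifies to $\frac{n!}{p_r\,(n-m+1)!}$. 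One small caution: your closing heuristic, that the $(n-m+1)!$ versus $(2!)^{r_2}\cdots(n!)^{r_n}$ contrast reflects a ``forced cyclic order'' inside non-crossing blocks, is not an argument and is somewhat misleading (blocks are unordered sets in both posets; the $(n-m+1)!$ has its origin in the cycle-lemma/Kreweras count, not in any ordering within blocks), so either drop that sentence or replace it with the $\frac{1}{n+1}\binom{n+1}{\cdots}$ explanation. Also check the degenerate convention $r_1=n$, $m=n$ in part (1) as you did; both formulas give $1$, as they should.
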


The sets $\mathcal{P}(n)$ and $\mathcal{NC}(n)$ can be equipped with the partial order $\preceq$ of reverse refinement ($\pi\preceq\sigma$ if and only if every block of $\pi$ is completely contained in a block of $\sigma$), making them ranked posets with rank $n-|\pi|+1$. Thus one can consider the incidence algebras of $\mathcal{P}(n)$ and $\mathcal{NC}(n)$. In both cases the minimum is given by the partition with $n$ blocks, $0_n=\{\{1\},\{2\},\cdots, \{n\}\}$, and the maximum is given by the partition with $1$ block, $1_n=\{\{1,2,\cdots,n\}\}$.

For two partitions $\sigma,\rho $ in the set of partitions $\mathcal{P}(n)$ the M\"obius function is given by 
$$\mu (\sigma ,\rho)=(-1)^{{|\sigma|-|\rho|}}(2!)^{{r_{3}}}(3!)^{{r_{4}}}\cdots ((n-1)!)^{{r_{n}}},$$
where $r_i$ is the number of blocks of $\rho$ that contain exactly $i$ blocks of $\sigma$.
In particular, for $\sigma=0_n$ we have
\begin{equation}\label{Moeb1} \mu (0_n ,\rho )=(-1)^{n-{|\rho|}}(2!)^{{t_{3}}}(3!)^{{t_{4}}}\cdots ((n-1)!)^{{t_{n}}},\end{equation}
where $t_i$ is the number of blocks of $\rho$ of size $i$.

It is well known (see \cite{Rota}) that for the set of partitions  the characteristic polynomial is given by  the Pochhammer symbol,
 \begin{equation} \label{charpol} \chi(\mathcal{P}(n),t)=(t)_n=t(t-1)\cdots(t-n+1). \end{equation}

Given a sequence of complex numbers $f=\{f_n\}_{n\in \mathbb{N}}$ we may extend $f$ to partitions in a multiplicative way by the formula $$f_\pi=f_{|V_1|}f_{|V_2|}\cdots f_{|V_n|},$$
where $V_1,\dots,V_n$ are the blocks of $\pi.$ In this note we will  frequently use the multiplicative extensions of the Pochhammer sequence $(d)_n=(d)(d-1)\cdots (d-n+1)$ and the factorial sequence $n!$, whose extensions will be denoted by $(d)_\pi$  and $N!_\pi$, respectively.

For two sequences of complex numbers $(a_n)_{n\geq 1}$ and $(b_n)_{n\geq 1}$ such that the formal series expansions hold
$$\log\big(1+\sum_{i=1}^\infty \frac{a_n}{n!} z^n\big)=\sum_{i=1}^\infty \frac{b_n}{n!} z^n, $$
we have the following formula which expresses $a_n$  in terms of $b_1,\dots,b_n$ as a sum over partitions
\begin{equation} \label{reccla}
a_n = \sum _{\pi \in \mathcal{P}(n)}  b_\pi \qquad \text{for }  n\in \mathbb{N}.
\end{equation}
This last formula is equivalent to the  recursion formula
\begin{equation}\label{reccla2}  a_k = \sum_{i=1}^k \binom{k-1}{i-1} b_i a_{k-i} \qquad \text{for } k\in \mathbb{N}. \end{equation}
Conversely, using M\"obius inversion formula we can write $b_n$ in terms of $a_1,\dots, a_n$, \begin{equation*}
 b_n = \sum _{\pi \in \mathcal{P}(n)} a_{\pi} (-1)^{\vert \pi \vert -1} (\vert \pi \vert -1)! \qquad \text{for } n \in \mathbb{N}.
\end{equation*}

\subsection{Free Convolution}

Free convolution was defined in \cite{Voi85} for compactly supported probability measures.
Let $G_\mu(z)$ be the Cauchy transform of $\mu \in \mathcal{M}$ defined by
$$G_\mu(z)=\int \frac{1}{z-t} \mu(dt).$$

For $z$ near infinity, $G_\mu$ has a compositional inverse, $G_\mu^{-1}$. We define the $R$-transfrom by
$$R_\mu(z)=G_\mu^{-1}(z)-\frac{1}{z}.$$

The \emph{free additive convolution} of two probability measures $\mu_1,\mu_2$ on $\mathbb{R}$ is the
probability measure $\mu_1\boxplus\mu_2$ on $\mathbb{R}$ such that 
$$R_{\mu_1\boxplus\mu_2}(z) = R_{\mu_1}(z) + R_{\mu_2}(z).$$ Free additive convolution corresponds to the sum of free random variables: $\mu_a\boxplus\mu_b=\mu_{a+b}$, for $a$ and $b$ free random variables. 

The \emph{free cumulants} \cite{Sp94} are the coefficients
$r_n= r_n (\mu)$ in the series expansion 
\begin{equation}\label{free cumulants}
R_\mu(z) = \sum_{n=1}^\infty r_n z^{n}.
\end{equation}

The sequence $(r_n)_{n\geq1}$ satisfies the moment-cumulant formula
\begin{equation}\label{MCF}
m_n =\sum_{\pi\in \mathcal{NC}(n)}r_{\pi},
\end{equation}
where $m_n$ denotes the $n$-th moment of the measure $\mu$.

\section{Finite Free Cumulants}

In this section  we want to define a notion of cumulants which satisfy desired properties so that we can calculate finite free convolution using these cumulants.

\subsection{Defining cumulants} Our viewpoint comes from the following axiomatization of cumulants of Lehner \cite{Lehner}.
 \begin{defi} Given a notion of independence on a non commutative probability space $(A, \phi)$, a sequence of maps $a\to k_n(a)$, $n= 1, 2, . . .$ is called a cumulant sequence if it
satisfies
\begin{enumerate}
\item $k_n(a)$ is a polynomial in the first $n$ moments of $a$ with leading term $m_n(a)$. 
\item Homogeneity: $k_n(\lambda a) = \lambda^n k_n(a)$
\item Additivity: if $a$ and $b$ are “independent” random variables, then $k_n(a + b) = k_n(a) +
k_n(b).$
\end{enumerate}
\end{defi}

We will define cumulants  for finite free additive convolution that we call \emph{finite free cumulants} which satisfy similar properties in the set of monic polynomials of degree $d$. 

First, Property (1) ensures that the moments can be recovered from the cumulants and vice versa. We will replace it by
\begin{enumerate}
\item[(1')] $k_n(p)$ is a polynomial in the first $n$ moments of $a$ with leading term $\frac{d^n}{(d)_n} m_n(a)$. 
\end{enumerate}
While it may seem strange that the leading term is $\frac{d^n}{(d)_n} m_n(a)$, in practice, there is not much difference (see Remark \ref{variation}) and it will be more convenient to stay with this convention since it will be consistent with the definition of the $R$ transform of Marcus \cite{Mar}.

Homogeneity makes cumulants behave well with respect to \emph{dilations}. To rephrase homogeneity for polynomials we need to translate what dilation means.  We define the dilation by $\lambda\neq 0$ of the polynomial $p$ (of degree d) by the formula 
$$D_\lambda p(x):=\lambda^{-d} p(\lambda x),$$
and $D_0(p(x))=x^d$. Thus we replace homogeneity by 
\begin{enumerate}
\item[(2')] for all monic polynomial $p(x)$ we have $k_n(D_\lambda p(x)) = \lambda^n k_n(p(x)).$
\end{enumerate}

Finally, additivity which is possibly the most important property of cumulants, ensures that they behave well with respect to convolution and in this case of polynomial convolution will be replaced by
\begin{enumerate}
\item[(3')]for all monic polynomials $p$ and $q$, we have  $k_n(p\boxplus_d q) = k_n(p)+k_n(q).$
\end{enumerate}

Our starting point for defining finite free cumulants is the following formula for the finite $R$-transform which follows from Corollary 4.5 in \cite{Mar}.
\begin{equation} \label{definition R}
\mathcal{R} ^d _{p} (s) \equiv - \frac{1}{d} \frac{\partial}{\partial s} \ln \left( \sum_{i=0}^d  \frac{(-d)^i  a_i}{(d)_i} s^{i} \right) \qquad \text{mod } [s^{d}],
\end{equation} when $p$ is  the monic polynomial $p(x) = \sum_{i=0}^d x^{d-i} (-1)^i  a^p_i$.

Since one can recover a polynomial from the first $d$ coefficients of the finite $R$-transform it makes sense to consider a truncated $R$-transform given by
the sum of the first $d$ terms in the series expansion of $\mathcal{R} ^d _{p} $.  This truncated $R$-transform will have the cumulants as coefficients.

\begin{defi}
\label{defcumfin}
Let $p$ be a monic polinomial of degree d, and  suppose the $\mathcal{R}^d_{p} (s)$ satisfies  
$$\mathcal{R}^d_{p} (s) \equiv \sum_{j=0}^{d-1} \kappa^p_{j+1} s^j \quad \text{ mod } [ s^d ].$$

\begin{enumerate}
\item We call the sum of the first $d$ terms in the series expansion of $\mathcal{R} ^d$ the \emph{truncated $R$-transform} and denote by $\mathcal{\tilde{ R}}^d_{p} (s)$, i.e. $$\mathcal{\tilde{ R}}^d_{p} (s) :=\sum_{j=0}^{d-1} \kappa^p_{j+1} s^j. $$

\item The numbers $\kappa^p_1, \kappa^p_2, \dots , \kappa^p_{d}$ will be called the  ($d$-)finite free cumulants.
\end{enumerate}
\end{defi} 

Our goal will be to recover the truncated $R$-transform, and thus the finite free cumulants in terms of the polynomial $p$.
Below we will give precise combinatorial formulas between the coefficients, the finite free cumulants and the moments of a polynomial. We will use this formulas to show that these cumulants satisfy the desired properties.

\begin{prop}
Given $d\in\mathbb{N}$  the finite free cumulants defined by $p\mapsto \kappa_n^p$, $n= 1, 2, \dots, d$ are the unique maps satisfying  properties (1'), (2') and (3').
\end{prop}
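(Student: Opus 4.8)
The plan is to establish two things: first, that the finite free cumulants $\kappa_n^p$ from Definition \ref{defcumfin} actually satisfy (1'), (2') and (3'); and second, that any family of maps satisfying these three properties must coincide with them. The second part is the genuinely structural claim and I expect it to be where uniqueness really comes from; the first part is essentially a verification using the explicit formula \eqref{definition R} for the finite $R$-transform.

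For the \emph{existence} half I would argue as follows. Property (1') is immediate from the definition: expanding $\mathcal{R}^d_p(s)$ via \eqref{definition R} as $-\tfrac1d\,\partial_s \ln(1+\sum_{i\ge 1}\tfrac{(-d)^i a_i}{(d)_i}s^i)$, one sees that $\kappa_{n}^p$ is a polynomial in $a_1,\dots,a_n$ — equivalently in $m_1(p),\dots,m_n(p)$ — and a direct look at the lowest-order contribution shows the coefficient of $m_n$ is $\frac{d^n}{(d)_n}$, matching the stated leading term. For (2'), I would compute how the coefficients $a_i$ transform under the dilation $D_\lambda p(x)=\lambda^{-d}p(\lambda x)$: if $p(x)=\sum x^{d-i}(-1)^i a_i^p$ then $D_\lambda p(x)=\sum x^{d-i}(-1)^i\lambda^{-i}a_i^p$, so $a_i\mapsto \lambda^{-i}a_i$. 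Substituting into \eqref{definition R} one checks that $\mathcal{R}^d_{D_\lambda p}(s)=\lambda\,\mathcal{R}^d_p(\lambda s)$ modulo $[s^d]$, whence $\kappa_{j+1}^{D_\lambda p}=\lambda^{j+1}\kappa_{j+1}^p$, which is (2'). For (3'), this is precisely the content that the finite $R$-transform linearizes $\boxplus_d$: using the explicit coefficient formula for $p\boxplus_d q$ quoted in the introduction, one shows $\mathcal{R}^d_{p\boxplus_d q}(s)\equiv \mathcal{R}^d_p(s)+\mathcal{R}^d_q(s)$ mod $[s^d]$ — this is Corollary 4.5 of \cite{Mar} recast in these terms — and additivity of the cumulants follows by reading off coefficients.

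For the \emph{uniqueness} half, suppose $p\mapsto k_n(p)$ is another family satisfying (1'), (2'), (3'). The key point is that (1') makes the passage between the moment vector $(m_1(p),\dots,m_d(p))$ and the cumulant vector $(k_1(p),\dots,k_d(p))$ a triangular, invertible change of coordinates with the \emph{prescribed} leading coefficients $\frac{d^n}{(d)_n}$; so a family satisfying (1') is determined by the "correction terms" of lower order. I would then exploit (3') together with a density/rigidity argument: both $k_n$ and $\kappa_n$ are additive for $\boxplus_d$, so $f_n(p):=k_n(p)-\kappa_n(p)$ satisfies $f_n(p\boxplus_d q)=f_n(p)+f_n(q)$, and by (1') each $f_n$ is a polynomial in $m_1(p),\dots,m_{n-1}(p)$ only (the top terms cancelled). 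Now one uses that $\boxplus_d$ on degree-$d$ monic polynomials, read in cumulant coordinates, is just coordinatewise addition: an additive polynomial function of lower-degree moments, when re-expressed in the $\kappa$-coordinates, must be a \emph{linear} form in $\kappa_1,\dots,\kappa_{n-1}$ with no constant term. Finally invoke (2'): homogeneity forces $f_n(D_\lambda p)=\lambda^n f_n(p)$, but a linear combination $\sum_{j<n} c_j\kappa_j(p)$ scales as a mix of powers $\lambda^j$ with $j<n$, which is incompatible with pure $\lambda^n$-homogeneity unless all $c_j=0$. Hence $f_n\equiv 0$, giving $k_n=\kappa_n$.

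The step I expect to be the main obstacle is making the uniqueness argument fully rigorous, specifically the claim that an additive (for $\boxplus_d$) function which is polynomial in the lower moments must become a homogeneous-of-the-wrong-degree linear form in the lower cumulants. The cleanest route is to change coordinates once and for all: define everything on the "cumulant space" $\mathbb{C}^d$ where $\boxplus_d$ is literally vector addition and $D_\lambda$ acts diagonally by $\lambda^n$ on the $n$-th coordinate, check that (1') guarantees $k_n-\kappa_n$ is a polynomial vanishing at the origin in coordinates $\kappa_1,\dots,\kappa_{n-1}$, note that additivity of a polynomial function on $\mathbb{C}^{n-1}$ forces it to be linear, and then let the $\lambda^n$-homogeneity from (2') kill it. I would want to be careful that (1') as stated — "polynomial in the first $n$ moments with prescribed leading term" — indeed yields this triangular invertibility over $\mathbb{C}$, so that the coordinate change is legitimate; once that is in place the rest is a short argument about homogeneous additive polynomials.
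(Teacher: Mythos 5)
Your proposal is correct in substance, but it takes a genuinely different (and more explicit) route than the paper, which at this point simply remarks ``it is standard that there are unique maps satisfying (1'), (2') and (3')'' and defers the verification that the $\kappa_n$ actually satisfy these properties to later sections: additivity is proved combinatorially from the coefficient--cumulant formula \eqref{aaatercum} in Section 3, while (1') and (2') are deduced only after the moment--cumulant formula \eqref{cum-mom formula} in Section 4 (the $\sigma=1_n$ term giving the leading coefficient $d^n/(d)_n$, and homogeneity read off from the scaling of $m_\sigma$). You instead verify (1') and (2') directly from the definition \eqref{definition R} of $\mathcal{R}^d_p$ and obtain (3') from the linearization of the finite $R$-transform via the coefficient formula for $\boxplus_d$ (essentially Marcus's Corollary 4.5, which is also what the paper's Section 3 proof re-derives combinatorially); this is legitimate but note that the linearization step is exactly the content of the paper's additivity proposition, so in a self-contained write-up you would have to carry out that computation rather than cite it. The real added value of your proposal is the uniqueness half, which the paper leaves implicit: your argument --- pass to cumulant coordinates, where $\boxplus_d$ is coordinatewise addition and $D_\lambda$ acts diagonally, observe that $f_n=k_n-\kappa_n$ is an additive polynomial function of $\kappa_1,\dots,\kappa_{n-1}$ (using the triangular invertibility guaranteed by (1') and formulas \eqref{aaatercum}, \eqref{cum-mom formula}), hence linear, and then let $\lambda^n$-homogeneity force it to vanish --- is the standard argument and is sound, provided you make explicit that the cumulant map is onto the coordinate space (all monic degree-$d$ polynomials, so onto $\mathbb{C}^d$) and that ``leading term'' in (1') means $m_n$ occurs only in the term $\frac{d^n}{(d)_n}m_n$. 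One caveat: with the paper's literal definition $D_\lambda p(x)=\lambda^{-d}p(\lambda x)$ the coefficients transform as $a_i\mapsto\lambda^{-i}a_i$ (as you say), but then $\mathcal{R}^d_{D_\lambda p}(s)=\lambda^{-1}\mathcal{R}^d_p(s/\lambda)$, giving $\kappa_n(D_\lambda p)=\lambda^{-n}\kappa_n(p)$, not $\lambda^{n}\kappa_n(p)$; your claimed identity $\mathcal{R}^d_{D_\lambda p}(s)=\lambda\mathcal{R}^d_p(\lambda s)$ corresponds to the dilation that multiplies the roots by $\lambda$ (i.e.\ $\lambda^{d}p(x/\lambda)$), which is evidently what the paper intends (it is the convention used in the Central Limit Theorem example), so this is an inherited convention inconsistency of the source rather than a gap in your argument, but you should fix the direction of the scaling so that the verification of (2') and the final homogeneity step use the same convention.
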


It is standard that there are unique maps satisfying properties (1'), (2') and (3').

\subsection{Finite free cumulants are additive}

We start by relating the coefficients of a polynomial $p$ of degree $d$ and its finite free cumulants.

\begin{prop}[Coefficient-cumulant formula] \label{aaacum}
Let $p(x) = \sum_{i=0}^d x^{d-i} (-1)^i  a^p_i$ be a polynomial of degree $d$ and let $(\kappa^p_n)^d_{n=1}$ be its finite free cumulants. The following formulas hold.
\begin{equation} \label{aaatercum}
a^p_{n} = \frac{(d)_n}{d^nn!} \sum _{\pi \in \mathcal{P}(n)} d ^{| \pi |}  \mu(0_n,\pi) \kappa^p_{\pi}, \qquad n\in\mathbb{N}.
\end{equation}

\begin{equation} \label{cumteraaa}
\kappa^p_{n} = \frac{(-d)^n}{d(n-1)!} \sum _{\pi \in \mathcal{P}(n)} \frac{(-1)^{| \pi |} N!_{\pi} a^p_{\pi} (| \pi | -1)!}{(d)_{\pi} } \qquad n=1, \ldots, d,
\end{equation}
where $N!_{\pi}:= \prod_{V \in \pi} (| V |)!$.
\end{prop}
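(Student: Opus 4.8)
The statement relates two pairs of quantities via the logarithm-to-partition-sum dictionary recorded in the preliminaries, so the strategy is to unwind the definition of $\mathcal{R}^d_p$ and apply formulas \eqref{reccla} and its Möbius-inverse. First I would introduce the auxiliary polynomial
\[
P_p(s) := \sum_{i=0}^d \frac{(-d)^i a^p_i}{(d)_i}\, s^i,
\]
so that, by \eqref{definition R}, $\mathcal{R}^d_p(s) \equiv -\tfrac1d \tfrac{\partial}{\partial s}\ln P_p(s) \pmod{s^d}$, and hence $\ln P_p(s) \equiv -d\int_0^s \mathcal{R}^d_p(u)\,du \pmod{s^{d+1}}$. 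Using the coefficient description of $\mathcal{R}^d_p$ from Definition \ref{defcumfin}, namely $\mathcal{R}^d_p(s) = \sum_{j=0}^{d-1}\kappa^p_{j+1}s^j$, integration gives $\ln P_p(s) \equiv -d\sum_{n=1}^{d}\frac{\kappa^p_n}{n}s^n$. I would rewrite this in the normalization matching \eqref{reccla}: setting $a_n := \frac{(-d)^n n!}{(d)_n}a^p_n$ (the coefficients of $P_p$ rescaled by $n!$) and $b_n := -d\,(n-1)!\,\kappa^p_n$, the relation becomes exactly $\log\!\big(1+\sum_{n\ge1}\frac{a_n}{n!}s^n\big) = \sum_{n\ge1}\frac{b_n}{n!}s^n$, valid modulo $s^{d+1}$, which is enough since both target formulas only involve indices $\le d$.

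**Applying the partition formulas.** Now \eqref{reccla} gives $a_n = \sum_{\pi\in\mathcal{P}(n)} b_\pi$. Expanding $b_\pi = \prod_{V\in\pi}\big(-d\,(|V|-1)!\,\kappa^p_{|V|}\big) = (-d)^{|\pi|} \big(\prod_{V\in\pi}(|V|-1)!\big)\,\kappa^p_\pi$, and substituting $a_n = \frac{(-d)^n n!}{(d)_n}a^p_n$, I solve for $a^p_n$:
\[
a^p_n = \frac{(d)_n}{(-d)^n n!}\sum_{\pi\in\mathcal{P}(n)} (-d)^{|\pi|}\Big(\prod_{V\in\pi}(|V|-1)!\Big)\kappa^p_\pi.
\]
To reach \eqref{aaatercum} I must identify $\tfrac{(-d)^{|\pi|}}{(-d)^n}\prod_{V\in\pi}(|V|-1)! = \tfrac{d^{|\pi|}}{d^n}\,(-1)^{|\pi|-n}\prod_{V\in\pi}(|V|-1)!$ with $\tfrac{d^{|\pi|}}{d^n}\mu(0_n,\pi)$ up to the factor that got reorganized. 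Here I invoke the explicit Möbius-function value: comparing with the recursion's inverse formula $b_n = \sum_{\pi\in\mathcal{P}(n)}a_\pi(-1)^{|\pi|-1}(|\pi|-1)!$ already quoted in the excerpt, one sees $(-1)^{n-|\pi|}\prod_{V\in\pi}(|V|-1)!$ is precisely $\mu(0_n,\pi)$ by the classical formula $\mu(0_n,\pi)=\prod_{V\in\pi}(-1)^{|V|-1}(|V|-1)!$. After this identification, the sign bookkeeping $(-d)^{|\pi|-n} = d^{|\pi|-n}(-1)^{|\pi|-n}$ combines with $(-1)^{n-|\pi|}$ inside $\mu$ to leave $d^{|\pi|}/d^n$ times $\mu(0_n,\pi)$, yielding \eqref{aaatercum}.

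**The inverse formula.** For \eqref{cumteraaa} I apply the Möbius-inversion half of the dictionary: $b_n = \sum_{\pi\in\mathcal{P}(n)} a_\pi\,(-1)^{|\pi|-1}(|\pi|-1)!$. Substituting $a_\pi = \prod_{V\in\pi}\frac{(-d)^{|V|}(|V|)!}{(d)_{|V|}}a^p_{|V|} = (-d)^{n}\,\frac{N!_\pi}{(d)_\pi}\,a^p_\pi$ (since $\sum_{V}|V| = n$), and $b_n = -d\,(n-1)!\,\kappa^p_n$, I solve:
\[
\kappa^p_n = \frac{-1}{d(n-1)!}\sum_{\pi\in\mathcal{P}(n)} (-d)^n\frac{N!_\pi\, a^p_\pi}{(d)_\pi}(-1)^{|\pi|-1}(|\pi|-1)!
= \frac{(-d)^n}{d(n-1)!}\sum_{\pi\in\mathcal{P}(n)} \frac{(-1)^{|\pi|}N!_\pi\, a^p_\pi(|\pi|-1)!}{(d)_\pi},
\]
which is exactly \eqref{cumteraaa}.

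**Main obstacle.** The conceptual content is light; the real work is bookkeeping. The step I expect to be most error-prone is getting the normalization right when passing from the coefficients $a^p_i$ of $p$ (and of $P_p$) to the exponential-type generating function required by \eqref{reccla}: one must track the $n!$ versus $(n-1)!$ factors and the two sign conventions $(-d)^i$ versus $d$ simultaneously, and confirm that the ``mod $[s^d]$'' truncations are harmless because all partitions of $[n]$ for $n\le d$ only reference cumulants $\kappa^p_k$ with $k\le d$. I would also want to double-check the $n=1$ boundary case ($\kappa^p_1 = a^p_1$, since $(d)_1=d$ and the only partition is a single block) to make sure constants line up. Everything else is a mechanical substitution into formulas already available in Section 2.
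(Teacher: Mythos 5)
Your proposal is correct and follows essentially the same route as the paper: it introduces the same rescaled sequences $\tilde a_n = \frac{(-d)^n n!}{(d)_n}a^p_n$ and $b_n=-d(n-1)!\,\kappa^p_n$, invokes the exponential/partition dictionary \eqref{reccla} (the paper uses its equivalent recursion form \eqref{reccla2}) together with the explicit M\"obius function \eqref{Moeb1}, and then M\"obius inversion for \eqref{cumteraaa}. The only cosmetic difference is that you integrate the $R$-transform to work with $\ln P_p$ directly rather than with its derivative, which changes nothing of substance.
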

\begin{proof}

Let us define for $i=1,...,d$ the number $\tilde a_i$ and $b_i$ by the formulas 
$$\tilde a_i = \frac{(-d)^i  a^p_i}{(d)_i} i!  \qquad b_i = -d \kappa^p_{i} (i-1)! . $$
By definition
\begin{equation*}
\sum_{j=1}^{d} \kappa^{p}_{j} s^{j-1} \equiv \mathcal{R} ^d _{p} (s) \equiv - \frac{1}{d} \frac{\partial}{\partial s} \ln \left( \sum_{i=0}^d  \frac{(-1)^i  a^p_i d^{i}}{(d)_i} s^{i} \right) \qquad \text{mod } [s^{d}].
\end{equation*}
Multiplying by $-d$ and substituting we get
$$\frac{\partial}{\partial s} \ln \left( \sum_{i=0}^{d}  \frac{\tilde a_i}{i!} s^i \right) \equiv \sum_{j=1}^{d} \frac{b_{j}}{(j-1)!} s^{j-1} \quad \text{mod } [s^{d}].$$
Since $\tilde a_0 = 1$ by Eq (\ref{reccla2}) this is equivalent to 
\begin{equation}\label{aux3}
\tilde a_n = \sum_{\pi \in \mathcal{P}(n)} b_{\pi} \hspace{1cm} \text{for } n = 1, \ldots d.
\end{equation}
Now notice that by formula (\ref {Moeb1}), for $\pi=\{V_1,\dots,V_n\}$ we have 
$$b_{\pi}=b_{|V1|}\cdots b_{|V_n|}=(-d)^{|\pi|}(|V_1|-1!)\cdots (|V_n|-1)!k^p_\pi =(-1)^n d^{|\pi|}\mu(0_n,\pi) \kappa^p_\pi .$$
and thus (\ref{aux3}) is equivalent to (\ref{aaatercum}).
By applying M\"obius inversion in the set of partitions $\mathcal{P}(n)$, we obtain 
\begin{equation*}
b_n = \sum _{\pi \in \mathcal{P}(n)} \tilde a_{\pi} (-1)^{| \pi | -1} (| \pi | -1)! \qquad \text{for } n = 1, \ldots d,
\end{equation*}
from which, similarly, follows the second formula by substituting $\tilde a_n$ and $b_n$.
\end{proof}

\begin{rem} \begin{enumerate}
\item Formula (\ref{cumteraaa}) could be taken as definition of cumulants. 
\item Notice that formula (\ref{aaatercum}) makes sense and is valid, trivially, also for $n>d$ since the left hand-side vanishes.
\item Is is noteworthy that (\ref{cumteraaa}) only depends on the first $n$-cumulants, since there is a priori no reason for this. 
\end{enumerate}
\end{rem}

Now we show that finite free cumulants are additive with respect to polynomial convolution.

\begin{prop}
Let $p$ and $q$ be monic polynomials of degree $d$. Then
\begin{equation*}
\kappa_k^{p \boxplus_d q } = \kappa_k^p + \kappa_k^q \qquad  k = 1, \ldots, d.
\end{equation*}
\end{prop}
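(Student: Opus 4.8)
The plan is to reduce additivity of the finite free cumulants to a known additivity statement at the level of coefficients, using the coefficient–cumulant formula of Proposition \ref{aaacum} as the dictionary. Concretely, write $p(x)=\sum_{i=0}^d x^{d-i}(-1)^i a_i^p$, $q(x)=\sum_{i=0}^d x^{d-i}(-1)^i a_i^q$, and recall from the introduction that the coefficients of $r:=p\boxplus_d q$ are $a_k^r=\sum_{i+j=k}\frac{(d-i)!(d-j)!}{d!(d-i-j)!}a_i^p a_j^q$. The cleanest route is to observe that this convolution becomes a plain product after the right normalization: if we set $\tilde a_i^p:=\frac{(-d)^i i!}{(d)_i}a_i^p$ (the quantity already called $\tilde a_i$ in the proof of Proposition \ref{aaacum}), and likewise $\tilde a_i^q$, $\tilde a_i^r$, then I would check by a short binomial computation that $\tilde a_k^r=\sum_{i+j=k}\binom{k}{i}\tilde a_i^p \tilde a_j^q$, i.e. the exponential generating functions multiply: $\sum_k \frac{\tilde a_k^r}{k!}s^k=\big(\sum_i \frac{\tilde a_i^p}{i!}s^i\big)\big(\sum_j \frac{\tilde a_j^q}{j!}s^j\big)$ modulo $[s^{d+1}]$. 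This is the finite-dimensional analogue of the fact that free convolution linearizes the $R$-transform, and it is where the specific combinatorial coefficients of $\boxplus_d$ are used.

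From there the argument is immediate. Taking logarithmic derivatives turns the product of EGFs into a sum, so in the notation of the proof of Proposition \ref{aaacum} the auxiliary sequences add: $b_k^r=b_k^p+b_k^q$ for $k=1,\dots,d$, where $b_k=-d\,\kappa_k(k-1)!$. Since the map $\kappa_k\mapsto b_k$ is an invertible rescaling (multiplication by the nonzero constant $-d(k-1)!$, independent of the polynomial), $b_k^r=b_k^p+b_k^q$ is equivalent to $\kappa_k^r=\kappa_k^p+\kappa_k^q$, which is the claim. Alternatively, and perhaps more transparently, one can argue directly from Definition \ref{defcumfin}: the truncated $R$-transform is $\mathcal R^d_p(s)\equiv-\frac1d\frac{\partial}{\partial s}\ln\big(\sum_i\frac{(-d)^i a_i^p}{(d)_i}s^i\big)$, the EGF-multiplication identity says the logarithms add modulo $[s^d]$, hence $\mathcal R^d_r(s)\equiv\mathcal R^d_p(s)+\mathcal R^d_q(s)\bmod[s^d]$, and comparing coefficients of $s^{k-1}$ gives additivity of the $\kappa_k$.

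I would therefore structure the proof as: (i) state and prove the normalized-product identity $\sum_k\frac{\tilde a_k^r}{k!}s^k\equiv\big(\sum_i\frac{\tilde a_i^p}{i!}s^i\big)\big(\sum_j\frac{\tilde a_j^q}{j!}s^j\big)\bmod[s^{d+1}]$ from the explicit formula for $\boxplus_d$; (ii) apply $-\frac1d\partial_s\ln(\cdot)$ to both sides to conclude $\mathcal R^d_{p\boxplus_d q}\equiv\mathcal R^d_p+\mathcal R^d_q\bmod[s^d]$; (iii) read off $\kappa_k^{p\boxplus_d q}=\kappa_k^p+\kappa_k^q$ by comparing coefficients. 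The only genuine obstacle is step (i): one must verify that $\frac{(d-i)!(d-j)!}{d!(d-i-j)!}\cdot\frac{(-d)^i i!}{(d)_i}\cdot\frac{(-d)^j j!}{(d)_j}=\binom{i+j}{i}\cdot\frac{(-d)^{i+j}(i+j)!}{(d)_{i+j}}$, which after writing $(d)_n=\frac{d!}{(d-n)!}$ is an elementary rearrangement of factorials; everything after that is formal manipulation of power series, valid since the series have constant term $1$ and we work modulo $[s^d]$ so inversions and logarithms are well defined as truncated formal series.
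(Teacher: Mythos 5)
Your argument is correct, and it takes a genuinely different (and shorter) route than the paper. The paper proves additivity at the level of the coefficient--cumulant expansion \eqref{aaatercum}: it substitutes the partition sums into $a_k^{p\boxplus_d q}/(d)_k=\sum_{i+j=k}a_i^p a_j^q/((d)_i(d)_j)$, splits $[k]$ into complementary subsets, glues pairs of partitions, and uses multiplicativity of the M\"obius function, concluding by the uniqueness of that expansion. You instead observe that in the normalization $\tilde a_i=\frac{(-d)^i i!}{(d)_i}a_i$ the convolution $\boxplus_d$ becomes plain multiplication of the series $\sum_i\frac{(-d)^i a_i}{(d)_i}s^i$ (mod $s^{d+1}$), which is exactly the series inside the logarithm defining $\mathcal R^d$; taking $-\frac1d\partial_s\ln$ then gives $\mathcal R^d_{p\boxplus_d q}\equiv\mathcal R^d_p+\mathcal R^d_q$ mod $s^d$ and the cumulants add by Definition \ref{defcumfin}. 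This buys a proof with no partition combinatorics at all, essentially composing the two steps the paper keeps separate; the paper's version, in exchange, stays entirely inside its combinatorial framework and reuses the machinery it needs later anyway. One small correction: the factorial identity you display at the end has the normalization factors on the wrong sides and is false as written (test $d=2$, $i=j=1$); the identity actually needed for step (i) is
\begin{equation*}
\frac{(d-i)!\,(d-j)!}{d!\,(d-i-j)!}\cdot\frac{(-d)^{i+j}(i+j)!}{(d)_{i+j}}
=\binom{i+j}{i}\,\frac{(-d)^i\, i!}{(d)_i}\cdot\frac{(-d)^j\, j!}{(d)_j},
\end{equation*}
and both sides indeed reduce to $(-d)^{i+j}(i+j)!\,\frac{(d-i)!\,(d-j)!}{d!\,d!}$, so your step (i), and hence the whole proof, stands after this repair.
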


\begin{proof}
Recall that 
\begin{equation*}
\frac{a^{p \boxplus_d q }_k }{(d)_k} = \sum_{i+j=k} \frac{a^p_i}{(d)_i} \frac{a^q_j}{(d)_j}.
\end{equation*}
substituting $\frac{a^p_i}{(d)_i}$ and $\frac{a^q_j}{(d)_j}$ by the formulas in \eqref{aaatercum} we get that the right-hand side of the equation above is given by
\begin{eqnarray}
\sum_{i+j=k} \left( \frac{1}{d^i i!} \sum _{\pi \in \mathcal{P}(i)} d^{| \pi |} \mu(0,\pi) \kappa^p_{\pi} \right) \left( \frac{1}{d^j j!} \sum _{\pi \in \mathcal{P}(j)} d^{| \pi |} \mu(0,\pi) \kappa^q_{\pi} \right) \nonumber \\
= \frac{1}{d^k k!} \sum_{i+j=k} \binom{k}{i} \left( \sum _{\pi \in \mathcal{P}(i)} d^{| \pi |} \mu(0,\pi) \kappa^p_{\pi} \right) \left( \sum _{\pi \in \mathcal{P}(j)} d^{| \pi |} \mu(0,\pi) \kappa^q_{\pi}  \right). \nonumber
\end{eqnarray}
Since, if $i+j=k$, we can interpret the binomial coefficient, $\binom{k}{i}$, as the number of ways of dividing the set $[k]$ in two subsets,  the first one having size $i$ and the second one having size $j$, we may rewrite the last sum as follows,
\begin{eqnarray}
&&\sum_{i+j=k} \binom{k}{i} \left( \sum _{\pi \in \mathcal{P}(i)}d^{| \pi |} \mu(0,\pi) \kappa^p_{\pi} \right) \left( \sum _{\pi \in \mathcal{P}(j)} d^{| \pi |} \mu(0,\pi) \kappa^q_{\pi}  \right) \nonumber \\
&=& \sum_{S \subset [k]} \left( \sum _{\pi_1 \in \mathcal{P}(S)} d^{| \pi_1 |} \mu(0,\pi_1) \kappa^p_{\pi_1} \right) \left( \sum _{\pi_2 \in \mathcal{P}(S^c)} d^{| \pi_2 |} \mu(0,\pi_2) \kappa^q_{\pi_2}  \right) \nonumber \\
&=& \sum_{\pi_1 \cup \pi_2 \in \mathcal{P}(k)} \left( d^{| \pi_1 |} \mu(0,\pi_1) \kappa^p_{\pi_1} \right) \left( d^{| \pi_2 |} \mu(0,\pi_2) \kappa^q_{\pi_2}  \right) \nonumber.\\
& = & \sum _{\pi \in \mathcal{P}(k)} \sum_{\pi_1 \cup \pi_2 = \pi} \left( d^{| \pi_1 |} \mu(0,\pi_1) \kappa^p_{\pi_1} \right) \left(d^{| \pi_2 |} \mu(0,\pi_2) \kappa^q_{\pi_2}  \right) \nonumber \\
& = & \sum _{\pi \in \mathcal{P}(k)} \sum_{\pi_1 \cup \pi_2 = \pi} d^{| \pi |} \mu(0,\pi) \kappa^p_{\pi_1} \kappa^q_{\pi_2} \nonumber \\
& = & \sum _{\pi \in \mathcal{P}(k)} d^{| \pi |} \mu(0,\pi) \sum_{\pi_1 \cup \pi_2 = \pi} \kappa^p_{\pi_1} \kappa^q_{\pi_2}  \nonumber \\
& = & \sum _{\pi \in \mathcal{P}(k)} d^{| \pi |} \mu(0,\pi) (\kappa^p  + \kappa^q )_{\pi}, \nonumber
\end{eqnarray}
where we used that the M\"obius function is multiplicative.
Thus, we obtain that 
\begin{equation*}
a^{p \boxplus_d q}_{k} = \frac{(d)_k}{d^kk!} \sum _{\pi \in \mathcal{P}(k)}  (\kappa^p  + \kappa^q )_{\pi}    d^{| \pi |} \mu(0,\pi) \qquad  k=1, \ldots, d.
\end{equation*}
Finally, since Equation \eqref{aaatercum} applied to $p \boxplus_d q$ is given by
\begin{equation*}
a^{p \boxplus_d q}_{k} = \frac{(d)_k}{d^kk!} \sum _{\pi \in \mathcal{P}(k)}  \kappa^{p \boxplus_d q}_{\pi}  d^{| \pi |} \mu(0,\pi) \qquad k=1, \ldots, d.
\end{equation*}
we get the conclusion.
\end{proof}

\begin{cor}The truncated  $R$-transform is additive w.r.t to finite free convolution,  i.e. $$\mathcal{\tilde{ R}}^d_{p\boxplus_d q}=\mathcal{\tilde{ R}}^d_{p}+\mathcal{\tilde{ R}}^d_{q}.$$

\end{cor}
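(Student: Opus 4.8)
The plan is to deduce this directly from the cumulant additivity just established, together with the definition of the truncated $R$-transform. Recall that by Definition~\ref{defcumfin}, for any monic polynomial $r$ of degree $d$ the truncated $R$-transform is the polynomial $\mathcal{\tilde{R}}^d_{r}(s) = \sum_{j=0}^{d-1} \kappa^r_{j+1} s^j$, whose coefficients are exactly the finite free cumulants $\kappa^r_1,\dots,\kappa^r_d$. So the statement is really just the coefficientwise identity $\kappa^{p\boxplus_d q}_k = \kappa^p_k+\kappa^q_k$ repackaged as an identity of generating polynomials.

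Concretely, I would first apply this definition to $r = p\boxplus_d q$, which is again monic of degree $d$ (so its truncated $R$-transform is well defined), obtaining $\mathcal{\tilde{R}}^d_{p\boxplus_d q}(s) = \sum_{j=0}^{d-1} \kappa^{p\boxplus_d q}_{j+1} s^j$. Then I would invoke the preceding Proposition, which gives $\kappa^{p\boxplus_d q}_{k} = \kappa^p_k + \kappa^q_k$ for every $k=1,\dots,d$, and substitute term by term. Splitting the finite sum into two then yields
$$\mathcal{\tilde{R}}^d_{p\boxplus_d q}(s) = \sum_{j=0}^{d-1}\kappa^p_{j+1}s^j + \sum_{j=0}^{d-1}\kappa^q_{j+1}s^j = \mathcal{\tilde{R}}^d_p(s) + \mathcal{\tilde{R}}^d_q(s),$$
which is the claimed identity.

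There is essentially no obstacle here; the only ingredients beyond the previous Proposition are that $p\boxplus_d q$ is monic of degree $d$ and that a finite sum can be split into two, both immediate. The corollary is included only to record the additivity in the analytically suggestive form of the $R$-transform.
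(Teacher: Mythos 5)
Your proposal is correct and matches the paper's (implicit) reasoning: the corollary is stated without proof precisely because it is the coefficientwise additivity $\kappa^{p\boxplus_d q}_k=\kappa^p_k+\kappa^q_k$ from the preceding Proposition, repackaged via Definition~\ref{defcumfin} as an identity of the truncated $R$-transform polynomials. Nothing further is needed.
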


\section{Moment-cumulant formulas}

For a a polynomial $p$ of degree $d$ with $p(x)=(x-r_1)(x-r_2)\dots (x-r_d)$ we define the $n^{\text{th}}$ moment of $p$ by 
$$m_n^p:=\frac{1}{d}\sum^d_{i=1} r_i^n.$$ 
In this section we will give a moment-cumulant formula. This is one of the main contributions of this paper. Before doing this we state the relation between the moments and the coefficients of $a_n$ which follows from Newton Identities. 

\begin{lem}[Moment-coefficient formula]
Let $p(x) = \sum_{i=0}^d x^{d-i} (-1)^i  a^p_i$ be a polynoimial of degree $d$ and let  $(m_n)^\infty_{n=1}$ be the moments of $p$.  Then
\begin{equation}
a^p_n = \frac{1}{ n!} \sum _{\pi \in \mathcal{P}(n)} d^{| \pi |} \mu(0,\pi)  m^p_{\pi}  \qquad  n=1, \ldots, d,
\label{aaatermom}
\end{equation}
and
\begin{equation}
m^p_n = \frac{(-1)^n}{d(n-1)!} \sum _{\pi \in \mathcal{P}(n)} (-1)^{| \pi |}  N!_{\pi}(\vert \pi \vert -1)!  a^p_{\pi} \qquad  n\in\mathbb{N}.
\label{momteraaa}
\end{equation}
\end{lem}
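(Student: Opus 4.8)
The plan is to recognize both identities as instances of the exponential formula / Newton's identities already recorded in Section~2, applied to a single generating polynomial. Write $p(x)=\prod_{j=1}^d(x-r_j)$, so that $a^p_i=e_i(r_1,\dots,r_d)$ is the $i$-th elementary symmetric polynomial and $m^p_n=\frac1d\sum_{j=1}^d r_j^n$. I would introduce
\[
 f(z):=\sum_{i=0}^d (-1)^i a^p_i z^i=\prod_{j=1}^d(1-r_j z),
\]
take formal logarithms, and expand $\log(1-r_jz)=-\sum_{n\ge1}r_j^n z^n/n$ to obtain
\[
 \log f(z)=-\sum_{n\ge1}\frac{1}{n}\Big(\sum_{j=1}^d r_j^n\Big)z^n=-\sum_{n\ge1}\frac{d\,m^p_n}{n}\,z^n ,
\]
which is precisely Newton's identity in logarithmic form.

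Next I would normalize this as in \eqref{reccla}: set $f(z)=1+\sum_{i\ge1}\frac{\tilde a_i}{i!}z^i$ with $\tilde a_i:=(-1)^i a^p_i\,i!$ (so $\tilde a_i=0$ for $i>d$), and $\log f(z)=\sum_{n\ge1}\frac{\tilde b_n}{n!}z^n$ with $\tilde b_n:=-d\,m^p_n\,(n-1)!$. Then \eqref{reccla} gives $\tilde a_n=\sum_{\pi\in\mathcal P(n)}\tilde b_\pi$ for every $n$. Since for a block $V$ of $\pi$ one has $\tilde b_{|V|}=-d\,m^p_{|V|}(|V|-1)!$,
\[
 \tilde b_\pi=(-d)^{|\pi|}\Big(\prod_{V\in\pi}(|V|-1)!\Big)m^p_\pi=(-d)^{|\pi|}(-1)^{n-|\pi|}\mu(0_n,\pi)\,m^p_\pi=(-1)^n d^{|\pi|}\mu(0_n,\pi)\,m^p_\pi ,
\]
where the middle equality is \eqref{Moeb1}, i.e.\ the fact that $\mu(0_n,\pi)=\prod_{V\in\pi}(-1)^{|V|-1}(|V|-1)!=(-1)^{n-|\pi|}\prod_{V\in\pi}(|V|-1)!$. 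Substituting $\tilde a_n=(-1)^n a^p_n\,n!$ and cancelling the common factor $(-1)^n$ yields \eqref{aaatermom}. (For $n>d$ the left side is $0$ and one simply gets a relation among the $m^p_k$'s, so in fact the formula holds for all $n$.)

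For \eqref{momteraaa} I would apply the converse, Möbius form recorded just after \eqref{reccla2}, namely $\tilde b_n=\sum_{\pi\in\mathcal P(n)}\tilde a_\pi\,(-1)^{|\pi|-1}(|\pi|-1)!$. Here $\tilde a_\pi=\prod_{V\in\pi}(-1)^{|V|}a^p_{|V|}\,|V|!=(-1)^n N!_\pi\,a^p_\pi$, so plugging in $\tilde b_n=-d\,m^p_n(n-1)!$ and solving for $m^p_n$ gives, after collecting the signs $(-1)^n$ and $(-1)^{|\pi|-1}$,
\[
 m^p_n=\frac{(-1)^n}{d(n-1)!}\sum_{\pi\in\mathcal P(n)}(-1)^{|\pi|}N!_\pi(|\pi|-1)!\,a^p_\pi ,
\]
which is \eqref{momteraaa}. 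The proof is essentially a substitution into the partition identities of Section~2; the only genuine bookkeeping is the sign and factorial tally, in particular the identification $\mu(0_n,\pi)=(-1)^{n-|\pi|}\prod_{V\in\pi}(|V|-1)!$, together with the (harmless) point that $f$ is a polynomial while $\log f$ is a genuine power series.
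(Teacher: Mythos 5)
Your proof is correct and follows essentially the same route as the paper: both reduce the statement to the partition identities \eqref{reccla}--\eqref{reccla2} and M\"obius inversion after the normalizations $\tilde a_i \sim i!\,a^p_i$, $\tilde b_n \sim d\,m^p_n\,(n-1)!$, the only (cosmetic) difference being that you obtain the Newton-identity relation by expanding $\log\prod_j(1-r_jz)$ directly, while the paper cites Newton's identities in recursive form and converts them to \eqref{reccla2}. Your sign bookkeeping via $\mu(0_n,\pi)=(-1)^{n-|\pi|}\prod_{V\in\pi}(|V|-1)!$ is accurate, so nothing is missing.
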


\begin{proof}
Newton identities state that the sequences  $b_k = r_1^k + r_2^k + \ldots + r_d^k$ and the coefficients $a_k= \sum_{sym} r_1r_2 \cdots r_k$ satisfy the relation
\begin{equation}\label{NI}
ka^p_k = \sum_{i=1}^k (-1)^{i-1} a^p_{k-i} b_i \qquad  k=1,\ldots, d.
\end{equation}
Now let us define the quantities 
$$\tilde a_k = k!a^p_k, \qquad \tilde b_k = (-1)^k(k-1)! b_k \qquad k=1,\ldots, d.$$
Then (\ref{NI}) is equivalent to 
$$\tilde a_k = \sum_{i=1}^k \binom{k-1}{i-1} \tilde b_i \tilde a_{k-i} \qquad \text{for } k = 1, \ldots, d.$$
By Eqs.  (\ref{reccla}) and (\ref{reccla2}) we have that
\begin{equation*}
\tilde a_n = \sum _{\pi \in \mathcal{P}(n)} \tilde b_\pi \qquad \text{for } n=1, \ldots, d.
\end{equation*}
Finally, by M\"obius inversion formula we obtain
\begin{equation*}
\tilde b_n = \sum _{\pi \in \mathcal{P}(n)} \tilde a_{\pi} (-1)^{\vert \pi \vert -1} (\vert \pi \vert -1)! \qquad \text{for } n = 1, \ldots d.
\end{equation*}
Substituting the values of $\tilde a_i$ and $\tilde b_i$ and observing that $\tilde b_i=d m_i$, we obtain the desired formulas.
\end{proof}

Now we are able to prove the moment-cumulant formulas. We will omit the superscript $p$ to avoid heavy notation.
\begin{thm}
Let $p$ be a polynomial of degree $d$ and let  $(m_n)^\infty_{n=1}$ and $(\kappa_n)^d_{n=1}$, be the moments and cumulants of $p$, respectively. Then
\begin{eqnarray} \label{cum-mom formula}
\kappa_n &=& \frac{(-d)^{n-1}}{(n-1)!} \sum_{\sigma \in \mathcal{P}(n)} d^{|\sigma|}\mu(0,\sigma) m_{\sigma}  \sum_{\pi \geq \sigma} \frac{\mu(\pi,1_n)}{(d)_{\pi}},
\\&=& \frac{(-1)^n d^{n-1}}{(n-1)!} \sum_{\sigma \in \mathcal{P}(n)} d^{|\sigma|}\mu(0,\sigma)m_{\sigma} \sum_{\pi \geq \sigma} \frac{(-1)^{\vert \pi \vert} (\vert \pi \vert -1 )!}{(d)_{\pi}}, \nonumber
\end{eqnarray}
for $n=1,\ldots, d$ and
\begin{eqnarray}\label{mom-cum formula}
m_n   &= &\frac{(-1)^n}{d^{n+1}(n-1)!} \sum_{\sigma \in \mathcal{P}(n)}d^{|\sigma|}\mu(0,\sigma)\kappa_\sigma \sum_{\pi \geq \sigma}- \mu(\pi,1_n)  (d)_\pi \\
 &=& \frac{(-1)^n}{d^{n+1}(n-1)!} \sum_{\sigma \in \mathcal{P}(n)}d^{|\sigma|}\mu(0,\sigma)\kappa_\sigma \sum_{\pi \geq \sigma} (-1)^{\vert \pi \vert} (d)_\pi(\vert \pi \vert -1)!
 \nonumber
\end{eqnarray}
for $n\in\mathbb{N}$.
\end{thm}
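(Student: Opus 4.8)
The plan is to chain together the two coefficient–cumulant formulas of Proposition \ref{aaacum} with the two moment–coefficient formulas of the preceding Lemma, eliminating the coefficients $a^p_n$ entirely. Concretely, to prove \eqref{cum-mom formula} I would start from \eqref{cumteraaa}, which expresses $\kappa_n$ as a sum over $\pi\in\mathcal{P}(n)$ of the multiplicative extension $a^p_\pi$ weighted by $\frac{(-1)^{|\pi|}N!_\pi(|\pi|-1)!}{(d)_\pi}$, and substitute into each factor $a^p_{|V|}$ the moment expression \eqref{aaatermom}, namely $a^p_m=\frac{1}{m!}\sum_{\tau\in\mathcal{P}(m)}d^{|\tau|}\mu(0_m,\tau)m^p_\tau$. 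Because all the relevant sequences are extended multiplicatively over blocks, the substitution into $a^p_\pi=\prod_{V\in\pi}a^p_{|V|}$ turns into a sum over pairs $(\pi,\sigma)$ with $\sigma\preceq\pi$ in $\mathcal{P}(n)$, where $\sigma$ is obtained by refining each block of $\pi$ according to the chosen $\tau_V$. This is the standard "two-step partition" bookkeeping: a refinement of a refinement is a refinement, and the multiplicative weights factor accordingly.

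The key identities to keep track of during this collapse are: (i) $N!_\pi=\prod_{V\in\pi}(|V|)!$ cancels precisely against the $\prod 1/m!$ coming from \eqref{aaatermom}; (ii) the factor $d^{|\tau_V|}$ over all blocks multiplies to $d^{|\sigma|}$; (iii) the product $\prod_{V\in\pi}\mu(0_{|V|},\tau_V)$ equals $\mu(\sigma,\pi)$ by the multiplicativity of the Möbius function on $\mathcal{P}(n)$ (stated in the preliminaries); and (iv) $\prod_{V\in\pi}\mu(0_{|V|},\cdot)$-type pieces recombine with the global $\mu(0_n,\pi)$-absent leading factor. After reordering the double sum so that $\sigma$ is summed on the outside and $\pi$ ranges over $\sigma\preceq\pi\preceq 1_n$, I would factor out $d^{|\sigma|}\mu(0_n,\sigma)m^p_\sigma$; here I must use that $\mu(0_n,\sigma)=\sum_{\pi\geq\sigma}$-nothing — rather, I should be careful: the identity I actually want is that $\sum_{\sigma\le\pi}\mu(0,\sigma)\mu(\sigma,\pi)=\delta_{0,\pi}$ is NOT what appears; instead $\mu(0_n,\sigma)$ appears directly because of how \eqref{cumteraaa} already contains the $(|\pi|-1)!(-1)^{|\pi|}$ which, via \eqref{Moeb1}, is $\mu$-data for $\pi$ viewed relative to its own refinement. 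The cleanest route is to replace $(-1)^{|\pi|}(|\pi|-1)!$ by $\mu(\pi,1_n)$ (using \eqref{Moeb1} read for the interval $[\pi,1_n]$, whose Möbius value is $(-1)^{|\pi|-1}(|\pi|-1)!$, up to the sign that gets absorbed into the overall $(-d)^n/(d\,(n-1)!)$ prefactor), which is exactly the form in which the target \eqref{cum-mom formula} is written. The second displayed line of each formula is then just \eqref{Moeb1} spelled out, so it requires no separate argument.

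For \eqref{mom-cum formula} I would run the symmetric computation: start from \eqref{momteraaa}, $m^p_n=\frac{(-1)^n}{d(n-1)!}\sum_\pi(-1)^{|\pi|}N!_\pi(|\pi|-1)!\,a^p_\pi$, and substitute \eqref{aaatercum}, $a^p_m=\frac{(d)_m}{d^m m!}\sum_\tau d^{|\tau|}\mu(0_m,\tau)\kappa^p_\tau$, into each block. The new wrinkle is the factor $\prod_{V\in\pi}(d)_{|V|}=(d)_\pi$, which is why $(d)_\pi$ rather than $1/(d)_\pi$ appears in \eqref{mom-cum formula}; otherwise the collapse is identical, yielding a sum over $\sigma\preceq\pi$ and, after reindexing, the claimed formula with the inner sum $\sum_{\pi\geq\sigma}-\mu(\pi,1_n)(d)_\pi$. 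The powers of $d$ (namely $d^{-n-1}$) and of $-1$ should be reconciled by carefully combining $\frac{1}{d}$ from \eqref{momteraaa}, $\frac{1}{d^m}$ per block from \eqref{aaatercum} (which multiply to $d^{-n}$), and the $(-1)$ signs from $(-1)^n$, $(-1)^{|\pi|}$, and $(-1)^{|\tau|}$-free $\mu(0_m,\tau)$ which already carries its sign.

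I expect the main obstacle to be purely organizational rather than conceptual: correctly bookkeeping the three layers of signs and the interplay between $N!_\pi$, $(d)_\pi$, $1/m!$ and the $d$-powers, and — most delicately — justifying the reindexing of $\sum_{\pi\in\mathcal{P}(n)}\sum_{(\tau_V)_{V\in\pi}}$ as $\sum_{\sigma\in\mathcal{P}(n)}\sum_{\pi:\,\sigma\preceq\pi\preceq 1_n}$, which rests on the bijection between a partition $\pi$ together with a partition of each of its blocks, and a pair $\sigma\preceq\pi$. Once that bijection is stated cleanly and the multiplicativity of $\mu$ is invoked to write $\prod_V\mu(0_{|V|},\tau_V)=\mu(\sigma,\pi)$, the rest is forced. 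A sanity check I would include is verifying \eqref{cum-mom formula} for $n=1,2$ by hand against the direct computation $\kappa_1=m_1$ and $\kappa_2=\frac{d}{d-1}(m_2-m_1^2)$, which both pins down the prefactors and confirms the leading-term normalization (1') is respected.
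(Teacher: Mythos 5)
Your proposal follows essentially the same route as the paper: substitute the moment--coefficient formula \eqref{aaatermom} blockwise into \eqref{cumteraaa} (and symmetrically \eqref{aaatercum} into \eqref{momteraaa}), use multiplicativity to collapse the product of blockwise sums into a single sum over refinements $\sigma\preceq\pi$, then exchange the order of summation and identify $(-1)^{|\pi|}(|\pi|-1)!$ with $\mu(\pi,1_n)$ up to sign --- this is exactly the paper's proof. One small slip to correct in the write-up: $\prod_{V\in\pi}\mu(0_{|V|},\tau_V)$ equals $\mu(0_n,\sigma)$ (the blocks of $\sigma$ are just the blocks of all the $\tau_V$ taken together), not $\mu(\sigma,\pi)$; since the quantity you actually factor out afterwards is $d^{|\sigma|}\mu(0_n,\sigma)m_\sigma$, this does not affect the argument.
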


\begin{proof}
Formula  \eqref{cumteraaa} tells us that
\begin{eqnarray}
\kappa_n &=&  \frac{(-d)^n}{d(n-1)!} \sum _{\pi \in \mathcal{P}(n)}  \frac{(-1)^{\vert \pi \vert} N!_{\pi} a_{\pi} (\vert \pi \vert -1)!}{(d)_{\pi} } \nonumber \\ &=& \frac{(-d)^n}{d(n-1)!} \sum _{\pi \in \mathcal{P}(n)} \frac{(-1)^{\vert \pi \vert} (\vert \pi \vert -1 )!}{(d)_{\pi}} \prod_{i=1}^j \vert V_i \vert! a_{\vert V_i \vert}, \nonumber
\end{eqnarray}
where  $\pi = \{ V_1, \ldots, V_j \}$. The last product can be rewritten in terms of the moments, by using formula \eqref{aaatermom}:
\begin{eqnarray}
\prod_{i=1}^j \vert V_i \vert! a_{\vert V_i \vert} &=& \prod_{i=1}^j  \sum_{\sigma_i \in \mathcal{P}(\vert V_i \vert)} d^{|\sigma_i|}\mu(0,\sigma_i)m_{\sigma_i}  \nonumber \\
&=&  \prod_{i=1}^j \sum_{\sigma_i \in \mathcal{P}(\vert V_i \vert)} d^{|\sigma_i|}\mu(0,\sigma_i) m_{\sigma_i}  \nonumber \\
&=&  \sum_{\substack{ \sigma = \sigma_1 + \ldots + \sigma_j \\ \sigma_1 \in \mathcal{P}(\vert V_1 \vert), \ldots, \sigma_j \in \mathcal{P}(\vert V_j \vert) }} d^{|\sigma|}\mu(0,\sigma) m_{\sigma} \nonumber \\
&=& \sum_{\sigma \leq \pi} d^{|\sigma|}\mu(0,\sigma) m_{\sigma} . \nonumber
\end{eqnarray}
So we conclude that
\begin{eqnarray}
\kappa_n &=& \frac{(-d)^n}{d(n-1)!} \sum _{\pi \in \mathcal{P}(n)} \frac{(-1)^{\vert \pi \vert} (\vert \pi \vert -1 )!}{(d)_{\pi}} \sum_{\sigma \leq \pi} d^{|\sigma|}\mu(0,\sigma) m_{\sigma}. \nonumber 
\end{eqnarray}
Changing the order of summation, by first considering the index $\sigma$ and then the index $\pi$ we can rewrite the previous formula as
\begin{equation*}
\kappa_n = \frac{(-1)^n d^{n-1}}{(n-1)!} \sum_{\sigma \in \mathcal{P}(n)} d^{|\sigma|}\mu(0,\sigma)m_{\sigma} \sum_{\pi \geq \sigma} \frac{(-1)^{\vert \pi \vert} (\vert \pi \vert -1 )!}{(d)_{\pi}},
\end{equation*}
for $n=1,\ldots, d$.

The proof of  (\ref{mom-cum formula})  follows the same lines, using instead formulas \eqref{momteraaa} and \eqref{aaatercum}.
\end{proof}

\begin{cor}
Finite free cumulants satisfy properties (1') and (2'). 
\end{cor}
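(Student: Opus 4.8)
The plan is to obtain both properties as an immediate consequence of the moment--cumulant formula \eqref{cum-mom formula}, which has just been proved, so that the corollary needs no new computation beyond reading off coefficients.

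For property (1'), I would start from the first expression for $\kappa_n$ in \eqref{cum-mom formula}. Its outer index $\sigma$ ranges over $\mathcal{P}(n)$, so every block of $\sigma$ has size at most $n$ and the factor $m_\sigma=\prod_{V\in\sigma}m_{|V|}$ is a monomial in $m_1,\dots,m_n$ alone; being a finite sum of such monomials with scalar coefficients, $\kappa_n$ is a polynomial in the first $n$ moments of $p$. To pin down the leading term I would isolate $\sigma=1_n$, which is the only partition of $[n]$ having a block of size $n$ and hence the only summand producing the monomial $m_n$ (every other $\sigma$ contributes a product of moments of index strictly less than $n$). For $\sigma=1_n$ one has $|\sigma|=1$ and $\mu(0_n,1_n)=(-1)^{n-1}(n-1)!$, while the inner sum over $\pi\ge 1_n$ collapses to the single term $\mu(1_n,1_n)/(d)_{1_n}=1/(d)_n$. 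Hence the coefficient of $m_n$ in $\kappa_n$ equals
$$\frac{(-d)^{n-1}}{(n-1)!}\cdot d\cdot(-1)^{n-1}(n-1)!\cdot\frac{1}{(d)_n}=\frac{d^{\,n}}{(d)_n},$$
which is exactly the constant demanded in (1') (and nonzero, since $n\le d$). This proves (1').

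For property (2'), I would first record the effect of the dilation $D_\lambda$ on moments: a direct computation from the definition of $D_\lambda$ gives $m_n(D_\lambda p)=\lambda^n m_n(p)$ for all $n$, with $m_n(D_0 p)=0$. Consequently, for every $\sigma\in\mathcal{P}(n)$ we obtain $m_\sigma(D_\lambda p)=\prod_{V\in\sigma}\lambda^{|V|}m_{|V|}(p)=\lambda^n m_\sigma(p)$, the exponents adding up to $n$ since the block sizes of $\sigma$ partition $n$. Substituting this into \eqref{cum-mom formula}, the scalar $\lambda^n$ is common to every summand and factors out, yielding $\kappa_n(D_\lambda p)=\lambda^n\kappa_n(p)$; the case $\lambda=0$ is immediate since then all moments, and hence all $m_\sigma$, vanish. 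This is (2').

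I do not anticipate a genuine obstacle, as the statement is essentially formal once \eqref{cum-mom formula} is in hand; the only points that need a moment's care are the bookkeeping in the leading-coefficient computation and the routine check that $D_\lambda$ rescales the moments homogeneously of degree $n$, which is precisely what makes \eqref{cum-mom formula} homogeneous of degree $n$ in the moments. An alternative route would bypass \eqref{cum-mom formula} and argue instead from \eqref{cumteraaa} together with \eqref{aaatermom} and \eqref{momteraaa}, but passing through \eqref{cum-mom formula} is the shortest.
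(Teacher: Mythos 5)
Your proposal is correct and follows essentially the same route as the paper: it reads property (1') off the moment--cumulant formula \eqref{cum-mom formula} by isolating $\sigma=1_n$ (your coefficient computation $\frac{(-d)^{n-1}}{(n-1)!}\cdot d\cdot(-1)^{n-1}(n-1)!\cdot\frac{1}{(d)_n}=\frac{d^n}{(d)_n}$ matches the paper's, just using the first form with $\mu(\pi,1_n)$ instead of the second), and obtains (2') from the homogeneity $m_\sigma(D_\lambda p)=\lambda^n m_\sigma(p)$, which is exactly the paper's one-line argument. The only caveat is notational and lies in the paper itself: with the literal definition $D_\lambda p(x)=\lambda^{-d}p(\lambda x)$ the roots scale by $1/\lambda$, so your (and the paper's implicitly used) relation $m_n(D_\lambda p)=\lambda^n m_n(p)$ corresponds to the intended convention that $D_\lambda$ dilates the root distribution by $\lambda$, as used in the Central Limit Theorem example.
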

\begin{proof}Property (1') follows since for $\sigma=1_n$ the quantity $$(-1)^n \frac{d^{n-1}}{(n-1)!} d^{|\sigma|}\mu(0,\sigma) \sum_{\pi \geq \sigma} \frac{(-1)^{\vert \pi \vert}(\vert \pi \vert -1)!}{ (d)_\pi}$$ equals
$$\frac{d^{n-1}}{(n-1)!} (-d) (n-1)! \frac{-1}{ (d)_{n}}=\frac{d^{n}}{(d)_n}.$$
Homogeneity follows from formula $(\ref{cum-mom formula})$ since $m_\sigma$ also satisfies it.
\end{proof}

\begin{rem} \label{variation}  If one insists that cumulants to have a leading term $m_n$ the formulas above are very similar.  Indeed, we could have defined cumulants to be $\tilde \kappa_n=\frac{(d)_n}{d^n}\kappa_n.$ In this case formulas (\ref{cum-mom formula}) and (\ref{mom-cum formula})
may be rewritten as
\begin{equation} \label{mum-mom formula 2}
\tilde\kappa_n = \frac{(-1)^n(d)_{n-1}}{(n-1)!} \sum_{\sigma \in \mathcal{P}(n)} d^{|\sigma|}\mu(0,\sigma) m_{\sigma}\sum_{\pi \geq \sigma} \frac{(-1)^{\vert \pi \vert} (\vert \pi \vert -1 )!}{(d)_{\pi}},
\end{equation}
for $n=1,\ldots, d$ and
\begin{equation}\label{mom-cum formula 2}
m_n = \frac{(-1)^n}{d^{n+1}(n-1)!} \sum_{\sigma \in \mathcal{P}(n)}(d)_\sigma \mu(0,\sigma) \tilde \kappa_\sigma \sum_{\pi \geq \sigma} (-1)^{\vert \pi \vert} (d)_\pi(\vert \pi \vert -1)!.
\end{equation}
\end{rem}

In Section \ref{aproach} we will analyze the finite free cumulants in the asymptotic regime. Before doing this, we will give a more combinatorial formula for the polynomial \[P_\sigma (d) := \sum_{\pi \geq \sigma} (-1)^{| \pi |} (d)_\pi(| \pi | -1)!, \] 
appearing in Equation (\ref{mom-cum formula}). We thank Drew Armstrong for pointing this lemma.

\begin{lem} For all $\sigma\in \mathcal{P}(n)$ the following formula holds
\[
P_\sigma (d) =\sum_{\rho \lor \sigma = 1_n} d^{| \rho |}  \mu (0, \rho).
\]
\end{lem}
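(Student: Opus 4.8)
The plan is to recognize the left-hand side $P_\sigma(d) = \sum_{\pi \geq \sigma}(-1)^{|\pi|}(d)_\pi(|\pi|-1)!$ as a sum over the interval $[\sigma, 1_n]$ in the partition lattice, and to exploit the lattice isomorphism $[\sigma, 1_n] \cong \mathcal{P}(|\sigma|)$ that sends a partition $\pi \geq \sigma$ to the partition of the block-set of $\sigma$ that it induces. Under this isomorphism, if $\pi \geq \sigma$ corresponds to $\bar\pi \in \mathcal{P}(|\sigma|)$, then $|\pi| = |\bar\pi|$, and the multiplicative extension $(d)_\pi$ does \emph{not} descend directly (since block sizes of $\pi$ differ from those of $\bar\pi$), so I would instead keep $(d)_\pi$ as a genuine polynomial in $d$ and argue by comparing the two sides as polynomials, or better, by a double-counting/Möbius argument described below.

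The cleanest route: recall from the preliminaries that $(d)_n = \chi(\mathcal{P}(n), d) = \sum_{\rho \in \mathcal{P}(n)} \mu(0_n, \rho) d^{|\rho|}$, and more generally $(d)_\pi = \prod_{V \in \pi}\chi(\mathcal{P}(V), d) = \sum_{\rho \leq \pi} \mu(0_n, \rho) d^{|\rho|}$, since the Möbius function and the $d^{|\cdot|}$ factor are both multiplicative over blocks and $\rho \leq \pi$ decomposes as a choice of a partition within each block of $\pi$. Substituting this into $P_\sigma(d)$ gives
\[
P_\sigma(d) = \sum_{\pi \geq \sigma} (-1)^{|\pi|}(|\pi|-1)! \sum_{\rho \leq \pi} \mu(0_n,\rho)\, d^{|\rho|} = \sum_{\rho \in \mathcal{P}(n)} \mu(0_n, \rho)\, d^{|\rho|} \sum_{\substack{\pi \geq \sigma \\ \pi \geq \rho}} (-1)^{|\pi|}(|\pi|-1)!,
\]
where I have interchanged the order of summation; the condition becomes $\pi \geq \rho \lor \sigma$. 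So it remains to show that the inner sum $\sum_{\pi \geq \rho \lor \sigma}(-1)^{|\pi|}(|\pi|-1)!$ equals $1$ when $\rho \lor \sigma = 1_n$ and $0$ otherwise.

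For that inner identity, fix $\tau = \rho \lor \sigma$ and note $\sum_{\pi \geq \tau}(-1)^{|\pi|}(|\pi|-1)!$ is a sum over $[\tau, 1_n] \cong \mathcal{P}(|\tau|)$, and under this isomorphism $|\pi| = |\bar\pi|$, so the sum equals $\sum_{\bar\pi \in \mathcal{P}(m)}(-1)^{|\bar\pi|}(|\bar\pi|-1)!$ with $m = |\tau|$. This last quantity is $\sum_{k=1}^m S(m,k)(-1)^k(k-1)!$ where $S(m,k)$ is the Stirling number of the second kind; it is a classical identity (equivalently, it is $b_m$ in the $a_n=\sum_\pi b_\pi$ / $\log$ formalism of the preliminaries applied to $a_n \equiv 1$, i.e. $e^z - $ wait, to $\sum a_n z^n/n! = e^z$, giving $\sum b_n z^n/n! = \log(1+(e^z-1)) = z$, hence $b_1 = 1$ and $b_n = 0$ for $n \geq 2$) that it equals $\delta_{m,1}$. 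Thus the inner sum is $1$ exactly when $|\rho \lor \sigma| = 1$, i.e. $\rho \lor \sigma = 1_n$, and $0$ otherwise, which collapses the outer sum to $\sum_{\rho \lor \sigma = 1_n}\mu(0_n,\rho)d^{|\rho|}$ as desired.

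The main obstacle is bookkeeping the interchange of summations and the lattice-interval identifications cleanly — in particular making sure that "$\rho \leq \pi$ and $\pi \geq \sigma$" correctly aggregates to "$\pi \geq \rho \lor \sigma$" and that the $(d)_\pi = \sum_{\rho \leq \pi}\mu(0_n,\rho)d^{|\rho|}$ expansion is justified blockwise via multiplicativity. The Stirling/logarithm identity $\sum_{\pi \in \mathcal{P}(m)}(-1)^{|\pi|}(|\pi|-1)! = \delta_{m,1}$ is the one genuinely external input, and it is exactly the $b_n$ formula already recorded in Section 2; everything else is rearrangement.
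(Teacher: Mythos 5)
Your argument is, in substance, the paper's own proof: you expand $(d)_\pi=\sum_{\rho\le\pi}\mu(0,\rho)\,d^{|\rho|}$ blockwise from the characteristic-polynomial identity $(d)_k=\sum_{\rho\in\mathcal{P}(k)}\mu(0,\rho)d^{|\rho|}$, interchange the two summations, replace the pair of conditions $\pi\ge\sigma$, $\pi\ge\rho$ by $\pi\ge\sigma\vee\rho$, and then collapse the inner sum over the interval $[\sigma\vee\rho,1_n]$. The only deviation is in the last step, and it is cosmetic: where the paper invokes the defining recursion of the M\"obius function, $\sum_{\pi\ge\tau}\mu(\pi,1_n)=\delta_{\tau,1_n}$, you re-derive the same fact through the interval isomorphism $[\tau,1_n]\cong\mathcal{P}(|\tau|)$ together with the $\log$/Stirling identity from Section 2; this buys nothing new, but it is a valid justification of the same step.

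One caveat, which you share with the paper: the inversion formula you quote has exponent $|\pi|-1$, so $\sum_{\pi\in\mathcal{P}(m)}(-1)^{|\pi|}(|\pi|-1)!=-\,\delta_{m,1}$, not $\delta_{m,1}$ (check $m=1$: the single partition contributes $-1$). Equivalently, $(-1)^{|\pi|}(|\pi|-1)!=-\mu(\pi,1_n)$. Carrying this through, the computation actually yields $P_\sigma(d)=-\sum_{\rho\vee\sigma=1_n}d^{|\rho|}\mu(0,\rho)$; for instance with $n=1$, $\sigma=1_1$ one has $P_\sigma(d)=-d$ while the stated right-hand side is $d$. The paper's proof commits exactly the same slip (it writes $(-1)^{|\pi|}(|\pi|-1)!(d)_\pi=\mu(\pi,1_n)(d)_\pi$), so the lemma as printed is off by this overall sign relative to the definition of $P_\sigma$; apart from reproducing that shared sign error, your proposal is correct and follows the paper's route.
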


\begin{proof}
By the known equation $(d)_k = \sum_{\pi \in \mathcal{P}(k)} \mu (0, \pi) d^{| \pi |}$, we can rewrite the polynomial $P_\sigma(d)$ as

\begin{eqnarray*}
P_\sigma (d) &=& \sum_{\pi \geq \sigma} (-1)^{| \pi |} (d)_\pi(| \pi | -1)!
=\sum_{\pi \geq \sigma} \mu (\pi, 1_n) \sum_{\rho \leq \pi}\mu (0,\rho)d^{| \rho |}\\
&=& \sum_{\rho\in \mathcal{P}(n) }\mu (0,\rho)d^{| \rho |} \sum_{\pi \geq \sigma,\pi\geq \rho} \mu (\pi, 1_n) 
= \sum_{\rho \in P(n) }\mu (0,\rho)d^{| \rho |} \sum_{\pi \geq \sigma\vee\rho} \mu (\pi, 1_n)\\
&=&\sum_{\rho:\sigma\vee\rho =1_n }\mu (0,\rho)d^{| \rho |},
\end{eqnarray*}
where we used in the last inequality the fact that when $\sigma\vee\rho \neq 1_n.$ $$\sum_{\pi \geq \sigma\vee\rho} \mu (\pi, 1_n)=0.$$
\end{proof}

As a corollary we may rewrite the moment-cumulant formula.
\begin{cor} \label{m-c 3}
\begin{eqnarray}\label{m-c 2} m_n&=& \frac{(-1)^n}{d^{n+1}(n-1)!} \sum_{\sigma \in \mathcal{P}(n)} d^{|\sigma|}\mu(0,\sigma) \kappa_\sigma \sum_{\rho:\rho \lor \sigma = 1_n} d^{| \rho |}  \mu (0, \rho)
\\ &=& \frac{(-1)^n}{d^{n+1}(n-1)!} \sum_{\substack{\sigma,\rho \in \mathcal{P}(n)\\ \rho \lor \sigma = 1_n}} d^{| \rho |+|\sigma|}  \mu (0, \rho) \mu(0,\sigma) \kappa_\sigma 
\end{eqnarray}

\end{cor}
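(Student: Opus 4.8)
The plan is to combine the second form of the moment-cumulant formula from Theorem (equation \eqref{mom-cum formula}, in the version with the polynomial $P_\sigma(d)$) with the preceding Lemma, which rewrites $P_\sigma(d)$. Concretely, recall that \eqref{mom-cum formula} can be written as
\[
m_n = \frac{(-1)^n}{d^{n+1}(n-1)!} \sum_{\sigma \in \mathcal{P}(n)} d^{|\sigma|}\mu(0,\sigma)\kappa_\sigma \sum_{\pi \geq \sigma} (-1)^{|\pi|}(d)_\pi(|\pi|-1)!,
\]
and the inner sum over $\pi$ is exactly $P_\sigma(d)$ as defined just before the Lemma. So the first step is simply to identify the inner $\pi$-sum with $P_\sigma(d)$.

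The second and main step is to invoke the Lemma, which states $P_\sigma(d) = \sum_{\rho \lor \sigma = 1_n} d^{|\rho|}\mu(0,\rho)$. Substituting this into the formula above immediately yields the first displayed equality of the corollary:
\[
m_n = \frac{(-1)^n}{d^{n+1}(n-1)!} \sum_{\sigma \in \mathcal{P}(n)} d^{|\sigma|}\mu(0,\sigma)\kappa_\sigma \sum_{\rho:\rho \lor \sigma = 1_n} d^{|\rho|}\mu(0,\rho).
\]

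The third step is purely cosmetic: since the constraint $\rho \lor \sigma = 1_n$ is symmetric in $\rho$ and $\sigma$, and every factor now depends only on block counts, we can merge the double sum into a single sum over pairs $(\sigma,\rho) \in \mathcal{P}(n)^2$ with $\rho \lor \sigma = 1_n$, collecting the powers of $d$ as $d^{|\rho|+|\sigma|}$, which gives the second displayed equality.

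There is essentially no obstacle here — the corollary is a direct substitution of the Lemma into Theorem's formula \eqref{mom-cum formula} followed by a trivial reindexing. The only point requiring a word of care is making sure the range of validity ($n \in \mathbb{N}$, as in \eqref{mom-cum formula}) is preserved, which it is since the Lemma holds for all $\sigma \in \mathcal{P}(n)$ with no restriction relating $n$ and $d$.
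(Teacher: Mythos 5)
Your proof is correct and follows exactly the route the paper intends: the corollary is stated immediately after the Lemma as a direct substitution of $P_\sigma(d)=\sum_{\rho\lor\sigma=1_n}d^{|\rho|}\mu(0,\rho)$ into formula \eqref{mom-cum formula}, followed by merging the two sums. Nothing further is needed.
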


\section{Cumulants approximate free cumulants}

In order to analyze formula (\ref{m-c 2}) in the asymptotic regime we need to analyze the asymptotic behavior of the polynomial 
$P_\sigma (d)$. This is the content of the next proposition. 

\begin{prop}
For all  $\sigma \in \mathcal{P}(n)$ the polynomial  $P_\sigma (d)$ is a polynomial  (in d) of degree $n+1 - | \sigma |$ with leading coefficient $\frac{(-1)^{| \sigma |} (n-1)! n_\sigma}{(n+1 - | \sigma |)!}$,  where for $\sigma=\{V_0,\dots, V_{m-1}\}$, $n_\sigma$ is given by $n_\sigma=|V_0||V_2|\cdots|V_{m-1}|$.
\end{prop}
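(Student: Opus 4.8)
The plan is to work from the combinatorial identity for $P_\sigma$ just established. Following the computation in that lemma — expanding $(d)_\pi=\sum_{\rho\leq\pi}\mu(0,\rho)d^{|\rho|}$ and using $\sum_{\pi\geq\tau}(-1)^{|\pi|}(|\pi|-1)!=-\sum_{\pi\geq\tau}\mu(\pi,1_n)=-[\tau=1_n]$ — one rewrites
\[
P_\sigma(d)=\sum_{\pi\geq\sigma}(-1)^{|\pi|}(d)_\pi(|\pi|-1)!
 =\pm\sum_{\substack{\rho\in\mathcal{P}(n)\\ \rho\lor\sigma=1_n}}\mu(0,\rho)\,d^{|\rho|},
\]
which is visibly a polynomial in $d$. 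Hence $\deg P_\sigma(d)=\max\{|\rho|:\rho\lor\sigma=1_n\}$ and the coefficient of the top power is, up to the overall sign (whose bookkeeping I leave to the write-up), $\sum\{\mu(0,\rho):\rho\lor\sigma=1_n,\ |\rho|\text{ maximal}\}$. So there are two tasks: identify the maximal $|\rho|$, and evaluate that signed sum.

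Write $m=|\sigma|$ and $v_i=|V_i|$, so $\sum_i v_i=n$. For the degree, regard a block $W$ of $\rho$ as the hyperedge of the set of $\sigma$-blocks consisting of those $V_i$ that $W$ meets, $c(W)$ of them; then $\rho\lor\sigma=1_n$ is equivalent to this hypergraph on the $m$ blocks being connected, which forces $\sum_{W\in\rho}(c(W)-1)\geq m-1$. Since $c(W)\leq|W|$, this gives $m-1\leq\sum_W(|W|-1)=n-|\rho|$, i.e. $|\rho|\leq n+1-m$; and equality is attained, e.g. by the partition $\rho$ whose only non-singleton block is a transversal $\{x_0,\dots,x_{m-1}\}$ with $x_i\in V_i$ (plus $n-m$ singletons), since $|\rho|=1+(n-m)$. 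Thus $\max|\rho|=n+1-m$, and $\deg P_\sigma=n+1-m$ provided the leading coefficient computed below is nonzero.

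For the leading coefficient I analyze the $\rho$ with $|\rho|=n+1-m$. Equality throughout the chain above forces: (i) $\sum_W(c(W)-1)=m-1$ with the associated hypergraph connected, i.e. the non-singleton blocks of $\rho$, viewed as subsets of the $m$-element set of $\sigma$-blocks, form a hypertree $H$ (connected, all edges of size $\geq 2$, $\sum_{e\in H}(|e|-1)=m-1$); and (ii) $c(W)=|W|$ for every $W$, i.e. each non-singleton block meets every $\sigma$-block it touches in exactly one point. So the extremal $\rho$ correspond bijectively to pairs: a hypertree $H$ on the blocks of $\sigma$, together with, for each hyperedge $e$ of $H$, a choice of one element in each $\sigma$-block incident to $e$, these choices disjoint inside every $V_i$ (a "transversal system"). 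For fixed $H$ the number of transversal systems is $\prod_i (v_i)_{\deg_H(i)}$ (an injection from the hyperedges at $V_i$ into $V_i$). By multiplicativity of $\mu$ together with $\sum_e(|e|-1)=m-1$, $\mu(0,\rho)=\prod_{e\in H}(-1)^{|e|-1}(|e|-1)!=(-1)^{m-1}\prod_{e\in H}(|e|-1)!$, depending only on $H$. Summing, the leading coefficient of $P_\sigma(d)$ equals, up to overall sign,
\[
(-1)^{m}\sum_{H\ \text{hypertree on the blocks of }\sigma}\ \Big(\prod_{e\in H}(|e|-1)!\Big)\Big(\prod_{i=0}^{m-1}(v_i)_{\deg_H(i)}\Big).
\]

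It then remains to show this hypertree sum equals $\dfrac{(n-1)!}{(n-m+1)!}\prod_i v_i=\dfrac{(n-1)!\,n_\sigma}{(n+1-|\sigma|)!}$; together with the previous step this gives the claimed leading coefficient and shows it is nonzero, pinning the degree to $n+1-|\sigma|$. This identity is the crux and the step I expect to be the main obstacle. I would prove it by induction on $m$: a hypertree always possesses a leaf hyperedge (one at most one of whose vertices has degree $>1$); deleting it together with its leaf-vertices lands in the same kind of sum on fewer vertices, and one tracks how $\prod_e(|e|-1)!$ and $\prod_i(v_i)_{\deg_H(i)}$ transform — the delicate point being to organize the recursion (e.g. by rooting $H$ at a fixed vertex) so as not to over- or under-count the leaf hyperedge. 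Alternatively, one can give a generating-function proof: read $\prod_i(v_i)_{\deg_H(i)}$ as counting "port assignments" (each vertex $i$ is a set of $v_i$ ports, incident hyperedges receiving distinct ports), turning the sum into a weighted count of spanning hypertree structures on the $n$ ports, to which the exponential formula / a generalized Cayley formula applies. Either way the computation is routine once set up, and the cases $m=1,2,3$ (immediate to check) fix all constants and signs.
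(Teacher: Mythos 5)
Your route is genuinely different from the paper's: the paper never touches the identity $P_\sigma(d)=\pm\sum_{\rho\vee\sigma=1_n}\mu(0,\rho)d^{|\rho|}$ in this proof, but instead derives a recursion in $n$ by removing the element $n$ (namely $P_\sigma(d)=dP_{\hat\sigma}(d)-\sum_j|V_j|P_{\hat{\sigma_j}}(d)$ when the block of $n$ has size greater than one, and $P_\sigma(d)=-(n-1)P_{\hat\sigma}(d)$ otherwise) and gets degree and leading coefficient by induction. Your reduction is sound as far as it goes: the bound $|\rho|\le n+1-|\sigma|$ when $\rho\vee\sigma=1_n$, the identification of the extremal $\rho$ with pairs (hypertree $H$ on the blocks of $\sigma$, injective choice of representatives), the count $\prod_i(v_i)_{\deg_H(i)}$ for fixed $H$, and $\mu(0,\rho)=(-1)^{m-1}\prod_{e\in H}(|e|-1)!$ are all correct; and since every hypertree contributes a nonnegative amount while (for $m\ge 2$) the single edge $\{V_0,\dots,V_{m-1}\}$ contributes $(m-1)!\,n_\sigma>0$, the degree claim is already settled by your argument. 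One sign remark you should not leave to ``bookkeeping'': since $(-1)^{|\pi|}(|\pi|-1)!=-\mu(\pi,1_n)$, the identity you invoke is really $P_\sigma(d)=-\sum_{\rho\vee\sigma=1_n}\mu(0,\rho)d^{|\rho|}$ (the paper's own statement of that lemma drops this minus sign; test $\sigma=1_n$, where $P_{1_n}(d)=-(d)_n$ while the unsigned sum is $(d)_n$); with it, your prefactor $(-1)^m$ is exactly the claimed $(-1)^{|\sigma|}$.

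The genuine gap is the step you yourself flag as the crux: the identity
\begin{equation*}
\sum_{H}\Big(\prod_{e\in H}(|e|-1)!\Big)\Big(\prod_{i=0}^{m-1}(v_i)_{\deg_H(i)}\Big)
=\frac{(n-1)!}{(n-m+1)!}\prod_{i=0}^{m-1}v_i ,
\end{equation*}
the sum being over hypertrees on the blocks of $\sigma$. All of the quantitative content of the leading coefficient sits in this formula (it is true, as your $m\le 3$ checks suggest), but your proposal contains no proof of it, only two candidate strategies: a leaf-hyperedge induction whose ``delicate point'' of not over- or under-counting you acknowledge but do not resolve, and an exponential-formula/``ports'' argument that is not set up. Calling this ``routine once set up'' undersells it: it is a weighted Husimi--Cayley-type enumeration of hypertrees, and proving it cleanly takes an argument of at least the same length and care as the paper's own induction (indeed, one natural way to prove it is to convert it back into a recursion in $n$ of exactly the paper's form). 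Until that identity is actually established, the leading-coefficient half of the proposition is not proved; the degree half, as noted, is.
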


\begin{proof}

Denote the blocks of  $\sigma$ by $V_0, V_1, \ldots, V_{m-1}$ with $n \in V_0$, and let us define $\sigma_1, \ldots, \sigma_{m-1} \in \mathcal{P}(n)$ by $\sigma_i = \{ V_1, \ldots, V_{i-1}, V_i \cup V_{0} , V_{i+1}, \ldots,  V_{m-1} \}$ for $i=1, \ldots, m-1$.

Now, for a given a partition $\pi = \{W_1, \ldots, W_r \} \in \mathcal{P}(n-1)$ and $k \in [n-1]$, we will denote $i_k$ the index of the block in $\pi$ containing the element $k$ ($k \in W_{i_k}$) and $\pi_{i_k} = \{W_1, \ldots, W_{i_k} \cup \{n\}, \ldots, W_r \} \in \mathcal{P}(n)$. We will also consider the partition $\pi_{0} = \{W_1, \ldots, W_r, \{n\} \} \in \mathcal{P}(n)$. 

Now, let us define
$$\hat{\cdot} : \mathcal{P}(n) \to \mathcal{P}(n-1)$$
$$ \pi \mapsto \hat{\pi} = \{W_1, \ldots, W_{i_n} \backslash \{n \}, \ldots, W_{k} \},$$
where $\pi = \{W_1, \ldots, W_{k} \}$.

Observe that given $\hat{\rho} = \{ U_1, \ldots, U_r \} \in \mathcal{P}(n-1)$ then $\rho \in \{\hat{\rho}_0, \hat{\rho}_1, \ldots, \hat{\rho}_r \}$ and that from the properties of the M\"obius function we have that $d^{| \rho_0 |} \mu (0, \rho_0) = d^{| \hat{\rho} |+1} \mu (0, \hat{\rho})$ and $d^{| \rho_i |} \mu (0, \rho_i) = -d^{| \hat{\rho} |} | U^i | \mu (0, \hat{\rho})$ for $i=1, \ldots, r$.

On the other hand we have that
$$\rho_{i_k} \lor \sigma = (\rho_0 \lor (0_{n-1})_{i_k})\lor \sigma = \rho_0 \lor ((0_{n-1})_{i_k})\lor \sigma)= \rho_0 \lor \sigma_{i_k}.$$
Since the block in $\rho_0$ containing $n$ has only one element and the block in $\sigma_{i_k}$ containing $n$ has more than one element we get that
$$\rho_{i_k} \lor \sigma = 1_n \quad \Leftrightarrow \quad \rho_0 \lor \sigma_{i_k} = 1_n \quad \Leftrightarrow \quad \rho \lor \hat{\sigma_{i_k}} = 1_{n-1}.$$

Therefore we get  
\begin{eqnarray*}
&& \sum_{i=1}^r d^{| \rho_i |}  \mu (0, \rho_i) I_{\rho_i \lor \sigma = 1_n} \sum_{k=1}^{n-1} \frac{d^{| \rho_{i_k} |}  \mu (0, \rho_{i_k})}{| U_k |} I_{\rho_{i_k} \lor \sigma = 1_n} = \sum_{k=1}^{n-1} \frac{- d^{| \rho |} | U_k | \mu (0, \rho}{| U_k |} I_{\rho_{i_k} \lor \sigma = 1_n} \\
&& \quad = -\sum_{k=1}^{n-1} d^{| \rho |}  \mu (0, \rho) I_{\rho \lor \hat{\sigma_{i_k}} = 1_{n-1}} = -\sum_{j=0}^{m-1} | V_j | d^{| \rho |} \mu(0, \rho) I_{\rho \lor \hat{\sigma_{j}} = 1_{n-1}}.
\end{eqnarray*}

Now, we divide in two cases depending on whether $| V |> 1$ or $| V_{0} | =1.$

For the case when $| V_{0} | > 1$ ($\sigma \nleq (1_{n-1})_0$) we can conclude that
\begin{eqnarray*}
P_\sigma (d) &=& \sum_{\rho \in \mathcal{P}(n-1)} \sum_{i=0}^r d^{| \rho_i |}  \mu (0, \rho_i) I_{\rho_i \lor \sigma = 1_n} \\
&=& \sum_{\rho \in \mathcal{P}(n-1)} d^{| \rho_0 |}  \mu (0, \rho_0) I_{\rho_0 \lor \sigma = 1_n}  + \sum_{\rho \in \mathcal{P}(n-1)} \sum_{i=1}^r d^{| \rho_i |}  \mu (0, \rho_i) I_{\rho_i \lor \sigma = 1_n}  \\
&=& \sum_{\rho \in \mathcal{P}(n-1)} d^{| \rho |+1}  \mu (0, \rho) I_{\rho \lor \hat{\sigma} = 1_{n-1}} - \sum_{\rho \in \mathcal{P}(n-1)} \sum_{j=0}^{m-1} | V_j | d^{| \rho |}  \mu (0, \rho) I_{\rho \lor \hat{\sigma_{j}} = 1_{n-1}} \\
&=& d\sum_{\rho \lor \hat{\sigma} = 1_{n-1}} d^{| \rho |}  \mu (0, \rho) -\sum_{j=0}^{m-1} | V_j |  \sum_{\rho \in \mathcal{P}(n-1)}  d^{| \rho |}  \mu (0, \rho) I_{\rho \lor \hat{\sigma_{j}} = 1_{n-1}} \\
&=& d P_{\hat{\sigma}}(d) - \sum_{j = 0}^{m-1} | V_j | P_{\hat{\sigma_{j}}}(d).
\end{eqnarray*}

On the other hand, for the case $| V_{0} | = 1$ ($\sigma \leq (1_{n-1})_0$) the condition $\rho_0 \lor \sigma = 1_n$ is never fulfilled for any $\rho$ (because $\rho_0 \lor \sigma \leq (1_{n-1})_0 < 1_n$), and we can observe that $\hat{\sigma}= \hat{\sigma_{0}} = \hat{\sigma_{1}} = \cdots = \hat{\sigma_{m-1}}$ and therefore in this case we get
\begin{eqnarray*}
P_\sigma (d) &=& \sum_{\rho \in \mathcal{P}(n-1)} d^{| \rho_0 |}  \mu (0, \rho_0) I_{\rho_0 \lor \sigma = 1_n}  + \sum_{\rho \in \mathcal{P}(n-1)} \sum_{i=1}^r d^{| \rho_i |}  \mu (0, \rho_i) I_{\rho_i \lor \sigma = 1_n}  \\ 
&=& - \sum_{j = 0}^{m-1} | V_j | P_{\hat{\sigma_{j}}}(d)   = - P_{\hat{\sigma}}(d) \sum_{j = 0}^{m-1} | V_j |  \\
&=& -(n-1) P_{\hat{\sigma}}(d) \\
\end{eqnarray*}

Now, it is easy to prove the claim by induction on $n$. For the case $l=| V_{0} | > 1$ one uses the recursion $P_\sigma (d) = d P_{\hat{\sigma}}(d) - \sum_{j = 0}^{m-1} | V_j | P_{\hat{\sigma_{j}}}(d) $.  The leading coefficient of $d P_{\hat{\sigma}}(d)$ is given by
\begin{equation}\label{primera parte}
\frac{(-1)^{| \sigma' |} ((n-1)-1)! n_{\sigma'} }{((n-1)+1 - | \sigma' |)!} = \frac{(-1)^{|\sigma|} (n-2)! n_\sigma}{(n - |\sigma|)!l} \left( l-1 \right),
\end{equation}
and the leading coefficient of $ \sum_{j = 0}^{m-1} | V_j | P_{\hat{\sigma_{j}}}(d)$ is given by
\begin{equation*}
\sum_{j = 1}^{k-1} | V_j | \left( \frac{(-1)^{k-1} (n-2)! \left( n_\sigma \frac{l-1 + | V_j | }{l| V_j |} \right)}{(n+1 -|\sigma|)!} \right)= \frac{(-1)^{k-1} (n-2)! n_\sigma}{(n+1-|\sigma|)!l} \left( (l-1)(m-1) + (n-l) \right). 
\end{equation*}

Subtracting the RHS from (\ref{primera parte}) we obtain the claim for this case. Finally,  the case $| V_{0} | = 1$ follows directly from the the formula $P_\sigma (d) = -(n-1) P_{\hat{\sigma}}(d)$.

\end{proof}
Now we can prove that finite free cumulants approach free cumulants and thus finite free convolution approaches free convolution as observed by Marcus \cite{Mar}.
\begin{thm}
The finite free cumulants converge to the free cumulants. That is,
$$\lim _{d \to \infty} \kappa_\sigma^{(d)} = r_\sigma$$
for all $\sigma \in \mathcal{P}$.
\end{thm}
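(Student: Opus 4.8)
The plan is to show that, term by term, the moment--cumulant formula \eqref{m-c 2} for finite free cumulants converges to the classical moment--cumulant formula \eqref{MCF} for free cumulants. Since the relation between $m_n$ and $\kappa_n^{(d)}$ is invertible (and polynomial with a nonvanishing leading coefficient, by Property (1')), it suffices to fix the moments $m_1,m_2,\dots$ independently of $d$ and check that the resulting cumulants $\kappa_n^{(d)}$ converge to the $r_n$ defined by \eqref{MCF}; the statement for $\kappa_\sigma^{(d)}$ then follows by multiplicativity, since $\kappa_\sigma^{(d)}=\prod_{V\in\sigma}\kappa_{|V|}^{(d)}$. So the real content is the case $\sigma=1_n$, i.e.\ showing $\lim_{d\to\infty}\kappa_n^{(d)}=r_n$, and I would prove this by induction on $n$ using the inverse formula \eqref{m-c 2}.

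The key computation is an asymptotic analysis of the double sum in \eqref{m-c 2} as $d\to\infty$. Write
\[
m_n = \frac{(-1)^n}{d^{n+1}(n-1)!}\sum_{\sigma\in\mathcal{P}(n)} d^{|\sigma|}\mu(0,\sigma)\,\kappa_\sigma^{(d)}\,P_\sigma(d),
\]
and recall from the preceding proposition that $P_\sigma(d)$ is a polynomial in $d$ of degree $n+1-|\sigma|$ with leading coefficient $\frac{(-1)^{|\sigma|}(n-1)!\,n_\sigma}{(n+1-|\sigma|)!}$. Hence the $\sigma$-term of the sum, after dividing by $d^{n+1}$, contributes on the order of $d^{|\sigma|+(n+1-|\sigma|)-(n+1)}=d^{0}$; that is, \emph{every} term survives in the limit, and the contribution of $\sigma$ is
\[
\frac{(-1)^n}{(n-1)!}\cdot\mu(0,\sigma)\cdot\big(\lim_d \kappa_\sigma^{(d)}\big)\cdot\frac{(-1)^{|\sigma|}(n-1)!\,n_\sigma}{(n+1-|\sigma|)!}
= \frac{(-1)^{n+|\sigma|}\,\mu(0,\sigma)\,n_\sigma}{(n+1-|\sigma|)!}\,\big(\lim_d\kappa_\sigma^{(d)}\big).
\]
Using the explicit value of $\mu(0_n,\sigma)$ from \eqref{Moeb1} together with Lemma \ref{type}, one checks that $\frac{(-1)^{n+|\sigma|}\mu(0,\sigma)\,n_\sigma}{(n+1-|\sigma|)!}$ is exactly the number of non-crossing partitions of $[n]$ whose block sizes match those of $\sigma$ --- equivalently, $\sum_{\sigma\in\mathcal{P}(n)}\frac{(-1)^{n+|\sigma|}\mu(0,\sigma)n_\sigma}{(n+1-|\sigma|)!}x_\sigma = \sum_{\pi\in\mathcal{NC}(n)}x_\pi$ for any multiplicative sequence $x$. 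Feeding in $x_n=\lim_d\kappa_n^{(d)}$ (which exists by the inductive hypothesis for indices $<n$, and we solve for the index-$n$ term) yields $m_n=\sum_{\pi\in\mathcal{NC}(n)}(\lim_d\kappa^{(d)})_\pi$, which is precisely the defining relation \eqref{MCF} for $r_n$; comparing with that relation gives $\lim_d\kappa_n^{(d)}=r_n$.

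Concretely the induction runs as follows. For $n=1$ both sides equal $m_1$. Assume $\lim_d\kappa_j^{(d)}=r_j$ for $j<n$. In \eqref{m-c 2}, separate the single term $\sigma=1_n$ (which by Property (1') has coefficient $d^n/(d)_n\to 1$, contributing $\kappa_n^{(d)}+o(1)$) from all other $\sigma$, each of which involves only cumulants of order $<n$ and hence, by the asymptotic count above and the inductive hypothesis, converges. Taking $d\to\infty$ gives $m_n = \lim_d\kappa_n^{(d)} + \sum_{\substack{\pi\in\mathcal{NC}(n)\\ |\pi|\geq 2}} r_\pi$, and subtracting this from \eqref{MCF} forces $\lim_d\kappa_n^{(d)}=r_n$. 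The main obstacle is the combinatorial identity in the middle step: verifying that the limiting weight $\frac{(-1)^{n+|\sigma|}\mu(0,\sigma)n_\sigma}{(n+1-|\sigma|)!}$ attached to a partition type is exactly the number of non-crossing partitions of that type. This is where one must combine the explicit Möbius values \eqref{Moeb1} with the enumeration formula \eqref{rtype partitions} of Lemma \ref{type}; everything else is bookkeeping with degrees of polynomials in $d$, which is controlled by the preceding proposition. One should also take a moment to justify interchanging the limit with the finite sums --- trivial here since $\mathcal{P}(n)$ is finite and each summand is a ratio of polynomials in $d$ with controlled degree --- and to note that the convergence is in fact at rate $O(1/d)$, matching Marcus's observation.
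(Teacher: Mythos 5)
Your proposal is correct and takes essentially the same route as the paper: the same termwise asymptotics of $P_\sigma(d)$ from the preceding proposition, the same conversion of the weighted sum over $\mathcal{P}(n)$ into a sum over $\mathcal{NC}(n)$ via the M\"obius values \eqref{Moeb1} and the type-counting Lemma \ref{type}, and the same induction on $n$ isolating the $\sigma=1_n$ term. One minor wording slip: the limiting weight $\frac{(-1)^{n+|\sigma|}\mu(0,\sigma)n_\sigma}{(n+1-|\sigma|)!}=\frac{N!_\sigma}{(n+1-|\sigma|)!}$ attached to a single $\sigma$ is not the number of non-crossing partitions of the type of $\sigma$ but the ratio of that number to the number of all partitions of that type; your displayed sum identity is the correct statement and is exactly what the paper uses.
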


\begin{proof}
First we rewrite the formula between moments and finite free cumulants in terms of the polynomial $P_\sigma(d)$
\begin{equation}
m_n = \sum_{\sigma \in \mathcal{P}(n)} \frac{ \kappa_\sigma}{d^{n+1-\vert \sigma \vert}}\frac{(-1)^n \mu(0,\sigma)}{(n-1)!}   P_{\sigma}(d). 
\end{equation}
Denote by $Q_\sigma(d):= \frac{(n+1 - \vert \sigma \vert)!}{(-1)^{\vert \sigma \vert} (n-1)! n_\sigma} P_{\sigma}(d)$. By the previous lemma $Q_\sigma(d)$ is a monic polynomial  of degree $n+1 -|\sigma |$. Moreover,
\begin{eqnarray}
\frac{(-1)^n \mu(0,\sigma)}{(n-1)!} P_{\sigma}(d)
&=& \frac{N!_\sigma}{(n+1 - \vert \sigma \vert)!} Q_\sigma(d).
\end{eqnarray}
Now, 
\begin{eqnarray}
m_n &=&  \sum_{\sigma \in \mathcal{P}(n)} \frac{\kappa_\sigma}{d^{n+1-\vert \sigma \vert}} \frac{N!_\sigma}{(n+1 - \vert \sigma \vert)!} Q_\sigma(d) \nonumber \\
&=& \sum_{\sigma \in \mathcal{P}(n)} \frac{N!_\sigma}{(n+1 - \vert \sigma \vert)!} \frac{Q_\sigma(d)}{d^{n+1-\vert \sigma \vert}} \kappa_\sigma \nonumber \\
&=& \sum_{\sigma \in \mathcal{NC}(n)} \frac{Q_\sigma(d)}{d^{n+1-\vert \sigma \vert}} \kappa_\sigma,\nonumber 
\end{eqnarray}
where the last equality follows from Lemma \ref{type} for the number of partitions and non-crossing partition per type. Since $Q_\sigma(d)$ es monic of degree $n+1-|\sigma |$,  taking limits for $\kappa_n^{(d)}$, we conclude that
\begin{eqnarray*}
\lim _{d \to \infty} \kappa_n^{(d)} &=& \lim _{d \to \infty} \frac{d^n}{(d)_n} \left( m_n - \sum_{\substack{\sigma \in \mathcal{NC}(n) \\ \sigma \neq 1_n}} \frac{Q_\sigma(d)}{d^{n+1-\vert \sigma \vert}} \kappa_\sigma ^{(d)} \right) \\
&=& m_n - \sum_{\substack{\sigma \in \mathcal{NC}(n) \\ \sigma \neq 1_n}} \lim _{d \to \infty} \frac{Q_\sigma(d)}{d^{n+1-\vert \sigma \vert}} \kappa_\sigma ^{(d)}  \\
&=& m_n - \sum_{\substack{\sigma \in \mathcal{NC}(n) \\ \sigma \neq 1_n}} \lim _{d \to \infty} \kappa_\sigma^{(d)} \\
&=& m_n - \sum_{\substack{\sigma \in \mathcal{NC}(n) \\ \sigma \neq 1_n}} r_\sigma \\
&=& r_n,
\end{eqnarray*}
where we used induction in the fourth equality.  This proves the claim for $\kappa_n$, $n \in \mathbb{N}$. The multiplicativity of cumulants shows the result for all partitions.
\end{proof}

\label{aproach}

\begin{cor}Finite free convolution approaches free convolution.
\end{cor}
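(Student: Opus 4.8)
The plan is to chain together the two additivity statements with the convergence of cumulants just proved, moving back and forth between moments and cumulants through the moment-cumulant formulas. First I would make the statement precise: let $(p_d)_d$ and $(q_d)_d$ be sequences of monic polynomials with $\deg p_d=\deg q_d=d$ whose root distributions converge weakly to compactly supported probability measures $\mu$ and $\nu$ respectively; since the measures are compactly supported this is equivalent to $m_n^{p_d}\to m_n(\mu)$ and $m_n^{q_d}\to m_n(\nu)$ for every $n\in\mathbb{N}$. The claim to be proved is then that the root distribution of $p_d\boxplus_d q_d$ converges weakly to $\mu\boxplus\nu$, i.e.\ $m_n^{p_d\boxplus_d q_d}\to m_n(\mu\boxplus\nu)$ for every $n$.

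The argument would proceed in three steps. Step 1: by the preceding theorem the finite free cumulants converge, $\kappa_n^{p_d}\to r_n(\mu)$ and $\kappa_n^{q_d}\to r_n(\nu)$, where $r_n(\mu),r_n(\nu)$ are the free cumulants of $\mu,\nu$. Step 2: by additivity of finite free cumulants (the Proposition of Section 3), $\kappa_n^{p_d\boxplus_d q_d}=\kappa_n^{p_d}+\kappa_n^{q_d}$, so $\kappa_n^{p_d\boxplus_d q_d}\to r_n(\mu)+r_n(\nu)$; since free cumulants are additive under $\boxplus$ (the $R$-transform is additive and the free cumulants are its coefficients, see \eqref{free cumulants}), this limit is $r_n(\mu\boxplus\nu)$, and by multiplicativity $\kappa_\sigma^{p_d\boxplus_d q_d}\to r_\sigma(\mu\boxplus\nu)$ for every $\sigma\in\mathcal{P}$. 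Step 3: pass back to moments using the identity established in the proof of the convergence theorem,
\[ m_n^{p_d\boxplus_d q_d}=\sum_{\sigma\in\mathcal{NC}(n)}\frac{Q_\sigma(d)}{d^{\,n+1-|\sigma|}}\,\kappa_\sigma^{p_d\boxplus_d q_d}, \]
which is valid for every monic polynomial of degree at least $n$ because it is a purely algebraic rearrangement of \eqref{mom-cum formula} via Lemma \ref{type}. Since $Q_\sigma$ is monic of degree $n+1-|\sigma|$ we have $Q_\sigma(d)/d^{\,n+1-|\sigma|}\to 1$, so letting $d\to\infty$ gives
\[ \lim_{d\to\infty} m_n^{p_d\boxplus_d q_d}=\sum_{\sigma\in\mathcal{NC}(n)} r_\sigma(\mu\boxplus\nu)=m_n(\mu\boxplus\nu), \]
the last equality being the free moment-cumulant formula \eqref{MCF}. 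This is exactly the asserted convergence, and a corresponding statement for finitely many polynomials follows by iteration.

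The verifications involved are routine: that moment convergence is equivalent to weak convergence for measures supported in a fixed compact interval, that the cumulant of each fixed order is a continuous (indeed polynomial) function of finitely many moments with $d$-independent combinatorial shape via \eqref{cum-mom formula}, and the bookkeeping in Step 3. The only point that needs a little care — and the place I would be most careful — is that the convergence theorem and the displayed moment-cumulant identity are being applied to the sequence $p_d\boxplus_d q_d$ itself; this is legitimate precisely because Step 2 shows that the finite free cumulants of $p_d\boxplus_d q_d$ do converge, and because the moment-cumulant identity is an identity for every polynomial and not merely for members of an a priori convergent family.
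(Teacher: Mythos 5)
Your argument is correct and is exactly the derivation the paper intends for this corollary, which it states without proof as an immediate consequence of the convergence theorem: additivity of the finite free cumulants plus their convergence to free cumulants, fed back through the identity $m_n=\sum_{\sigma\in\mathcal{NC}(n)}Q_\sigma(d)d^{-(n+1-|\sigma|)}\kappa_\sigma$ and the free moment--cumulant formula \eqref{MCF}. The only tacit ingredient worth flagging is that $p_d\boxplus_d q_d$ is again real-rooted (a fact from \cite{MSSd}), which is what lets you read the moment convergence as weak convergence of empirical root distributions to the (compactly supported, hence moment-determinate) measure $\mu\boxplus\nu$.
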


\section{Applications and examples}

\begin{exa}[Central Limit Theorem] Now that we have proved that cumulants satisfy the desired properties, the Central Limit Theorem can be easily derived by a standard argument using them.  Indeed, let $p$ be a polynomial with $\kappa_1(p)=0$, $\kappa_2(p)=1$  and denote by $p_n=p\boxplus_d \cdots \boxplus_d p$ the $n$-fold finite free convolution of $p$. Then $\kappa_1(D_{1/\sqrt{n}} (p_n))=0$, $\kappa_2(D_{1/\sqrt{n}} (p_n))=1$
and for $r>2$ and $$\kappa_r(D_{1/\sqrt{n}} (p_n))=\frac{n}{n^{r/2}}\kappa_r(p)=\frac{1}{n^{r/2-1}} \kappa_r(p)\to 0.$$
From Proposition \ref{aaacum} we see that the coefficients of the limiting polynomial are given by $a_{2i+1}=0$ and $$a_{2i} = \frac{(d)_{2i}}{d^{2i}(2i)!} \sum_{\pi\in P_2(2i)} d^{i} (-1)^i=  \frac{(d)_{2i} }{d^{i}}\frac{ (-1)^i}{ 2^i i!},$$ where we used that the set $P_2(2i)$ of pair partitions of $[2i]$ has cardinality $\frac{(2i)!}{2^i i!}$.  Hence, we recover the result of Marcus \cite{Mar} that the Central Limit Theorem is given by the polynomial $d^{-d/2} H_d(\sqrt{d}x)$ \footnote{ Marcus considers the scaling $\kappa_2(p)=1-1/d$.} since  
$$H_{d}(x)=d!\sum _{i=0}^{\lfloor {\tfrac {d}{2}}\rfloor }{\frac {(-1)^{i}}{i!(d-2i)!}}{\frac {x^{d-2i}}{2^{i}}}.$$
\end{exa}

\begin{exa}[Poisson distribution]  A Poisson distribution is a distribution with cumulants $\kappa_n=\lambda$ for all $n$.  In the case of polynomials we assume that $\lambda d $ is a positive integer to obtain a valid polynomial.  So let $Poiss(\lambda,d)$ be the polynomial with finite free cumulants equal to $\lambda$ and degree $d$. For this case the coefficients are again very explicit in terms of Pochhamer symbols, 
$$a_n=\frac{(d)_n}{d^n n!}\sum_{\pi\in \mathcal{P}(n)} d^{|\pi|}\lambda^{|\pi|} \mu(0,\pi)=\frac{(d)_n}{d^n n!}\chi_{\mathcal{P}(n)}(d\lambda)=\frac{(d)_n}{d^n n!}(d\lambda)_n.$$
For $\lambda=1/d$ we obtain the $a_1=1$ and $a_n=0$ for $n\geq2$. i.e. $p(x)=x^d-x^{d-1}$ as observed by Marcus.  Notice that in this case something somewhat counter-intuitive happens: all cumulants and all moments equal $1/d$. 

In general for $\lambda d$ a positive integer we obtain a modification of the Laguerre polynomials.  The observation that $(d\lambda)_n=0$ if and only if $n\geq d\lambda+1$ shows that  for $\lambda<1$, $Poiss(\lambda,d)$ is of the form  $x^d+\cdots+c_{d\lambda} x^{d-d\lambda}$ and thus has a root at $0$ of multiplicity $d-d\lambda$ or, equivalently, has an atom of size $1-\lambda$.  This is in accordance with the usual free Poisson distribution. However, contrary to the free case, $Poiss(\lambda,d)$ is \textbf{never} infinitely divisible as it will follow from Proposition  \ref{divinf}.
\end{exa}

For a monic polynomial $p$ we denote by $p^{\boxplus_d t}$ the polynomial such that $t\kappa_n (p^{\boxplus_d t})=t \kappa_n (p)$. A monic polynomial with real roots is said to be infinitely divisible is for all $t>0$, the polynomial $p^{\boxplus_d t}$ has real roots.

\begin{prop}\label{divinf}
The following are necessary conditions for infinite divisibility.\begin{enumerate}
\item The sequence $\{\tilde \kappa_n\}^d_{n=1}$ is conditionally positive definite.
\item The sequence $\{\kappa_n\}^d_{n=1}$ is conditionally positive definite.
\end{enumerate}
\end{prop}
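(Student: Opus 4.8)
The plan is to reduce the claim to the classical fact that infinite divisibility of a measure forces conditional positive-definiteness of its free cumulant sequence, transported through the finite-to-free dictionary, but executed entirely at the finite level so that the conclusion is a statement about the finitely many numbers $\kappa_1,\dots,\kappa_d$. First I would recall that $p^{\boxplus_d t}$ having real roots means that $p^{\boxplus_d t}$ is the characteristic polynomial of some $d\times d$ Hermitian matrix; equivalently, its moments $m_n(p^{\boxplus_d t})$ are the moments of a probability measure supported on $d$ points (the empirical root distribution), hence the truncated Hankel matrices built from $(m_n(p^{\boxplus_d t}))_{n=0}^{?}$ are positive semidefinite. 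By additivity of cumulants (the Proposition in Section 3), $\kappa_n(p^{\boxplus_d t}) = t\,\kappa_n(p)$ for $n=1,\dots,d$, and by the moment-cumulant formula in Corollary \ref{m-c 3} each moment $m_n(p^{\boxplus_d t})$ is a polynomial in $t$ whose coefficients are explicit in the $\kappa_\sigma$'s.

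The key step is then a small-$t$ asymptotic analysis. I would expand $m_n(p^{\boxplus_d t})$ as a polynomial in $t$ using \eqref{m-c 2}: the term $\sigma = 0_n$ contributes the monomial of highest degree $t^n$ (since $\kappa_{0_n} = \kappa_1^n$ and $|\sigma|=n$), while the term $\sigma = 1_n$ contributes the linear term, giving $m_n(p^{\boxplus_d t}) = c\, t\, \kappa_n + O(t^2)$ for an explicit positive constant $c = \frac{d^n}{(d)_n}\cdot\frac1d$ coming from the $(1')$ computation in the Corollary after the moment-cumulant theorem — more precisely $m_n(p^{\boxplus_d t}) = t\,\tilde\kappa_n/d + O(t^2)$ in the $\tilde\kappa$ normalization, using \eqref{mom-cum formula 2} restricted to the $\sigma=1_n$ term. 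Now positivity of the Hankel matrix $(m_{i+j})_{i,j}$ (with the convention $m_0 = 1$, $m_i = m_i(p^{\boxplus_d t})$) for every $t>0$ forces, upon dividing the off-diagonal/away-from-corner entries by $t$ and letting $t\to 0^+$, that the "reduced" quadratic form $\sum_{i,j\geq 1} \bar\xi_i \xi_j \, m_{i+j}/t \to \sum_{i,j\geq 1}\bar\xi_i\xi_j\,(\text{const}_{i+j})\,\tilde\kappa_{i+j}$ is positive semidefinite on the subspace $\{\xi : \sum\xi_i = 0\}$ — this is exactly the statement that $\{\tilde\kappa_n\}$ is conditionally positive definite. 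The passage requires being slightly careful: one takes $\xi$ with $\sum_i \xi_i \lambda^i$ having a zero of appropriate order, or equivalently works with the standard trick that a sequence $(h_n)$ is conditionally positive definite iff $(h_{n+2})_{n\geq 0}$ (the "second difference" Hankel) is positive definite, and the latter pops out as the $t\to 0$ limit of $t^{-1}$ times the shifted Hankel of the moments. Part (2) follows from part (1) exactly as in the free case: $\kappa_n = \frac{(d)_n}{d^n}\tilde\kappa_n$ and the factor $\frac{(d)_n}{d^n}$ does not destroy conditional positive-definiteness because — I would verify — it is itself the sequence of moments of a probability measure (indeed $\frac{(d)_n}{d^n} = \prod_{j=0}^{n-1}(1-j/d)$ is completely monotone-ish / is a moment sequence of a measure on $[0,1]$, as $(d)_n/d^n$ is the $n$-th falling-factorial moment normalized), and a Hadamard/Schur product of a positive-definite kernel with a moment kernel preserves conditional positive-definiteness on the mean-zero subspace.

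The main obstacle I anticipate is making the $t\to 0$ limiting argument fully rigorous while only using the finitely many moments $m_1,\dots,m_{d-1}$ (or however many the Hankel matrix of size matching the $d$-point support allows): the empirical root measure of $p^{\boxplus_d t}$ is supported on $d$ points, so only Hankel matrices up to a certain size are automatically positive semidefinite, and one must check that this size is large enough to extract conditional positive-definiteness of the full length-$d$ cumulant sequence. I would handle this by noting that a measure on exactly $d$ atoms has its $\lceil d/2\rceil \times \lceil d/2\rceil$ Hankel matrix positive definite and its relations are governed by moments up to degree $d-1$ or $2\lceil d/2\rceil - 2$; checking that "conditionally positive definite sequence of length $d$" is precisely the condition surviving the limit is a finite linear-algebra bookkeeping exercise rather than a conceptual difficulty. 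The rest — expanding \eqref{m-c 2}, identifying the linear-in-$t$ coefficient, and invoking the Schur-product lemma for part (2) — is routine.
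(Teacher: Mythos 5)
Your argument for part (1) is, in substance, the paper's own proof: the paper sets $p_N=p^{\boxplus_d 1/N}$ (real-rooted by the definition of infinite divisibility), reads off from the moment--cumulant formula that $N\,m_n(p_N)\to\tilde\kappa_n(p)$ --- your expansion $m_n(p^{\boxplus_d t})=t\,\tilde\kappa_n(p)+O(t^2)$ with $t=1/N$ --- and then obtains $\sum_{i,j\ge1}\alpha_i\bar\alpha_j\,\tilde\kappa_{i+j}(p)=\lim_N N\int\bigl|\sum_i\alpha_i x^i\bigr|^2\,d\rho_N(x)\ge0$, where $\rho_N$ is the empirical root measure of $p_N$. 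Two remarks on your version: the linear-in-$t$ coefficient is exactly $\tilde\kappa_n(p)=\frac{(d)_n}{d^n}\kappa_n(p)$ (this is precisely what the normalization in Remark \ref{variation} is designed to achieve), not $\frac{d^n}{d\,(d)_n}\kappa_n$; and the ``main obstacle'' you anticipate is not an obstacle: the moments of any positive measure yield positive semidefinite Hankel matrices of every size (finitely many atoms only affect strict positivity and rank), and no mean-zero constraint is needed once the indices are restricted to $i,j\ge1$, which is exactly the form of conditional positive definiteness the paper's computation establishes.

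The genuine problem is your deduction of (2) from (1). With the paper's conventions $\tilde\kappa_n=\frac{(d)_n}{d^n}\kappa_n$, hence $\kappa_n=\frac{d^n}{(d)_n}\tilde\kappa_n$; you have this relation inverted, and the auxiliary claim you rely on --- that $\frac{(d)_n}{d^n}=\prod_{j=0}^{n-1}(1-j/d)$ is a moment sequence of a measure on $[0,1]$ --- is false: with $c_n=(d)_n/d^n$ one has $c_0=c_1=1$, $c_2=1-1/d$, so the Hankel minor $\left(\begin{smallmatrix}1&1\\1&1-1/d\end{smallmatrix}\right)$ has determinant $-1/d<0$; indeed the Remark following Proposition \ref{divinf} exploits precisely the failure of positivity of $(d)_n/d^n$ to show that (2) does not imply (1). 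What the Schur-product step actually requires is that the reciprocal sequence $d^n/(d)_n$ be positive definite, i.e.\ that the matrices $\bigl(d^{i+j}/(d)_{i+j}\bigr)_{i,j\ge1}$ (for $i+j\le d$) are positive semidefinite; this is what the paper's one-line passage from (1) to (2) invokes, and in your write-up it is neither stated nor proved. So part (1) stands (it coincides with the paper's argument up to the constant slips), but part (2) needs the corrected multiplier together with a proof or citation of its positive definiteness.
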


\begin{proof}
Suppose that $p$ is infinitely divisible and let $p_N$ be such that $p$ is the $N$-fold finite free convolution of $p_N$. i.e.  $p=p_N\boxplus_d \cdots \boxplus_d p_N$, then $N\tilde\kappa_n (p_N)=\tilde\kappa_n (p)$. Now, by the moment-cumulant formula we may rewrite $\tilde \kappa_n(p)$ as 

$$ N \tilde \kappa_n (p_N) = Nm_n(p_N) - N\sum_{\substack{\sigma \in \mathcal{NC}(n) \\ \sigma \neq 1_n}} \frac{Q_\sigma(d)}{d^{n+1-\vert \sigma \vert}}  N^{-|\sigma|}\tilde \kappa_\sigma(p) $$ for some fixed $Q_\sigma$. When $N$ approaches infinity we have that
$$\tilde \kappa(p)=\lim _{N \to \infty} N \tilde \kappa_n (p_N) = \lim _{N \to \infty} Nm_n(p_N) .$$
Consider now $\alpha_1, \ldots, \alpha_r  \in \mathbb{C}$. Then we have 
\begin{eqnarray*}
\sum_{i,j=1}^r \alpha_i \bar\alpha_j \tilde \kappa_{i+j}(p) &=& \lim_{N \to \infty} N \sum_{i,j=1}^r \alpha_i \bar\alpha_j m_{i+j}  \\
&=&  \lim_{N \to \infty} N \sum_{i,j=1}^r \alpha_i \bar\alpha_j \int x^{j+i} d\rho(x) \\
&=&  \lim_{N \to \infty} N \int \sum_{i,j=1}^r \alpha_i x^i \bar\alpha_j x^j d\rho(x) \\
&=&  \lim_{N \to \infty} N \int \left(\sum_{i=1}^r \alpha_j x^i \right)  \overline{\left(\sum_{j=1}^r\alpha_j x^j\right)} d\rho (x)\geq0,
\end{eqnarray*}
where $\rho$ is the spectral measure of $p$. This proves (1).

(1) implies (2). Indeed since  $\frac{d^n}{(d)_n}$ and $\tilde \kappa_n$ are conditionally positive definite then also $\kappa_n=\frac{d^n}{(d)_n}\tilde \kappa_n$.
\end{proof}

\begin{rem} The simplest example where (2) does not imply (1) is the $Poisson(1,d)$. That $Poisson(1,d)$ does not satisfy (1) can be seen from the fact that $\frac{(d)_n}{d^n}$ is not conditionally positive definite.
While (1) is a stronger condition than (2), neither (1) or (2) are sufficient condition for infinite divisibility.  In fact we will characterize infinitely divisible measures as the ones given by the central limit theorem.
 \end{rem}

The following theorem shows that the class of infinitely divisible measures is rather small.
\begin{thm}
The unique infinite divisible polynomial $p$ of degree $d$ with $m_1(p)=0$ and $m_2(p)=1$ is the one we get from the Central Limit Theorem, $p(x)=d^{-d/2} H_m(\sqrt{d}x)$.
\end{thm}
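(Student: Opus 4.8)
The plan is to prove that infinite divisibility forces every finite free cumulant beyond the second to vanish, and then to conclude by the uniqueness of a monic polynomial with prescribed cumulants. The mechanism is that, for an infinitely divisible $p$, the polynomials $p^{\boxplus_d t}$ are real-rooted for all $t>0$, and this real-rootedness is exactly what makes the computation behind Proposition~\ref{divinf} quantitative.

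Concretely, I would fix an infinitely divisible $p$ of degree $d$ with $m_1(p)=0$ and $m_2(p)=1$, and for $N\in\mathbb{N}$ set $p_N:=p^{\boxplus_d 1/N}$, with real roots $r^{(N)}_1,\dots,r^{(N)}_d$. By definition $\kappa_n(p_N)=\kappa_n(p)/N$, hence $\tilde\kappa_\sigma(p_N)=N^{-|\sigma|}\tilde\kappa_\sigma(p)$ for every partition $\sigma$. Since every summand of the moment--cumulant formula \eqref{mom-cum formula} contains at least one cumulant factor, this gives $m_n(p_N)=O(1/N)$; in particular $\frac1d\sum_i\bigl(r^{(N)}_i\bigr)^2=m_2(p_N)\to0$, and because the roots are real, $R_N:=\max_i|r^{(N)}_i|\le\sqrt{d\,m_2(p_N)}\to0$. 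On the other hand, multiplying \eqref{mom-cum formula} for $p_N$ by $N$ and letting $N\to\infty$ — the summands indexed by $\sigma\ne1_n$ carry a factor $N^{1-|\sigma|}\to0$, since then $|\sigma|\ge2$ — one obtains, exactly as in the proof of Proposition~\ref{divinf},
\[
\tilde\kappa_n(p)=\lim_{N\to\infty}N\,m_n(p_N)=\lim_{N\to\infty}\frac{N}{d}\sum_{i=1}^{d}\bigl(r^{(N)}_i\bigr)^{n},\qquad n\in\mathbb{N}.
\]

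The crux is then a ``small jumps'' estimate: for $n\ge3$, using that the roots are real,
\[
\left|\frac{N}{d}\sum_{i=1}^{d}\bigl(r^{(N)}_i\bigr)^{n}\right|\ \le\ R_N^{\,n-2}\cdot\frac{N}{d}\sum_{i=1}^{d}\bigl(r^{(N)}_i\bigr)^{2}\ =\ R_N^{\,n-2}\cdot N\,m_2(p_N)\ \xrightarrow[N\to\infty]{}\ 0,
\]
because $R_N\to0$ while $N\,m_2(p_N)\to\tilde\kappa_2(p)$ stays bounded. Hence $\tilde\kappa_n(p)=0$ for every $n\ge3$. Combined with $\tilde\kappa_1(p)=m_1(p)=0$ and $\tilde\kappa_2(p)=m_2(p)=1$ (the latter since $\tilde\kappa_2$ has leading term $m_2$ and is homogeneous of degree $2$, so $\tilde\kappa_2=m_2+c\,m_1^2$ for a constant $c$, and $m_1(p)=0$), this determines the finite free cumulants of $p$ completely: $\kappa_1(p)=0$, $\kappa_2(p)=\frac{d^2}{(d)_2}=\frac{d}{d-1}$, and $\kappa_n(p)=0$ for $n\ge3$.

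Finally, by the coefficient--cumulant formula (Proposition~\ref{aaacum}) a monic polynomial of degree $d$ is determined by its finite free cumulants, so $p$ must be the polynomial produced by the Central Limit Theorem discussed above, i.e.\ the (suitably rescaled) Hermite polynomial $d^{-d/2}H_d(\sqrt{d}\,x)$; and conversely that polynomial is infinitely divisible, since for it $p^{\boxplus_d t}=D_{\sqrt{t}}\,p$ is just a dilation and hence real-rooted for all $t>0$. I expect the only delicate part to be bookkeeping: one must invoke the \emph{proof} of Proposition~\ref{divinf} for the displayed limit identity (not merely its statement), and one must keep track of the normalization — with the convention $m_2(p)=1$ one lands on a fixed dilation of the polynomial in the Example, while Marcus's scaling $m_2(p)=1-1/d$ gives exactly $d^{-d/2}H_d(\sqrt{d}\,x)$. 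All the remaining steps are forced by additivity and the moment--cumulant formulas already established.
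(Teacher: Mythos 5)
Your argument is correct, and it reaches the conclusion by a genuinely different route than the paper. The paper also passes to the convolution roots $p_n$ and uses real-rootedness to bound the largest root, $\lambda_d^2(p_n)\le d\,m_2(p_n)=\tfrac{(d-1)}{n}\kappa_2(p)$, but it then only treats the fourth cumulant explicitly: writing $\kappa_4$ via the moment--cumulant formula with explicit constants $c_1=\tfrac{d^4}{(d)_4}$, $c_2=\tfrac{d^4}{(d)_4}\tfrac{2d-3}{d-1}$, it gets $|\kappa_4(p)|\le C\kappa_2^2(p)/n\to 0$, hence $\kappa_4=0$, and then invokes the conditional positive definiteness of $(\kappa_n)$ from Proposition~\ref{divinf} (a Cauchy--Schwarz argument: a vanishing diagonal entry $\kappa_{2+2}$ kills all $\kappa_{2+j}$) to conclude $\kappa_n=0$ for $n\ge 3$. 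You instead push the same root bound through every order at once: from the linearized limit identity $\tilde\kappa_n(p)=\lim_N N\,m_n(p_N)$ (which you correctly rederive rather than quote, since only the proof of Proposition~\ref{divinf} contains it) and the elementary estimate $\bigl|\sum_i (r_i^{(N)})^n\bigr|\le R_N^{\,n-2}\sum_i (r_i^{(N)})^2$ with $R_N\to 0$, you get $\tilde\kappa_n(p)=0$ for all $n\ge 3$ directly, with no positive-definiteness step and no explicit constants. What your route buys is uniformity and self-containedness (Proposition~\ref{divinf} is not needed as a black box, only real-rootedness of $p^{\boxplus_d 1/N}$, additivity, and the moment--cumulant formula); what the paper's route buys is brevity once Proposition~\ref{divinf} is in place. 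Two small points: the vanishing claim should be stated for $3\le n\le d$ only, since finite free cumulants of order larger than $d$ are not defined, and that range suffices because Proposition~\ref{aaacum} determines the monic degree-$d$ polynomial from $\kappa_1,\dots,\kappa_d$; and your remark on the normalization ($m_2=1$ giving $\kappa_2=\tfrac{d}{d-1}$, hence a fixed dilation of $d^{-d/2}H_d(\sqrt{d}x)$) is accurate --- the same discrepancy is glossed over in the paper's own statement, so it is not a defect of your argument. Your inclusion of the converse (the Hermite polynomial is itself infinitely divisible because $p^{\boxplus_d t}$ is a dilation of it) is a worthwhile addition the paper leaves implicit.
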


\begin{proof}
Suppose that $p$ is infinitely divisible and let $p_n$ be such that $p$ is the $n$-fold finite free convolution of $p_n$. i.e. $p=p_n^{\boxplus n}=p_n\boxplus_d \cdots \boxplus_d p_n$. Let us denote 
by $\lambda_1(p),\lambda_2(p),\ldots ,\lambda_d(p)$ the roots of $p$ where $|\lambda_1(p)|\leq|\lambda_2(p)|\leq\cdots \leq|\lambda_d(p)|$. It is readily seen that $\lambda_d^{2k}(p)\leq d m_{2k}(p)\leq d\lambda_d^{2k}(p)$. 
 Since $m_1(p)=0$, by the properties of cumulants we have that
$$\frac{1}{n}\kappa_2(p)=\kappa_2(p_n)=\frac{d}{d-1}m_2(p_n)\geq \frac{1}{d-1}\lambda_d^2(p_n).$$
Now, using the moment cumulant formula for $\kappa_4$, we obtain
$$|\kappa_4(p)|=|n\kappa_4(p_n)|=n|c_1m_4(p_n)+c_2m_2^2(p_n)|\leq nc|\lambda_d^4(p_n)+(\lambda_d^2(p_n))^2|$$
$$\leq 2nc\frac{(d-1)^2\kappa_2^2(p)}{n^2}=C\frac{\kappa_2^2(p)}{n}$$
where $c_1=\frac{d^4}{(d)_4}$, $c_2=\frac{d^4}{(d)_4}\frac{2d-3}{d-1}$, $c=\max(|c_1|,|c_2|)$ and $C=2c(d-1)^2$. 

Since $C$ and $\kappa_2^2(p)$ does not depend on $n$ we get that
$C\frac{\kappa_2^2(p)}{n} \to 0$, when $n\to\infty$, implying that $0\leq |\kappa_4(p)|\leq 0$, or $\kappa_4=0$.  Finally, since $\kappa_n$ is positive definite by Propostion \ref{divinf}, we conclude that $\kappa_n=0$, for all $n\geq 3$. This means that the only polynomial $p$ which is infinitely divisible is the one where all cumulants are null except for $\kappa_2$, or,  in other words, $p(x)=d^{-d/2} H_m(\sqrt{d}x)$.
\end{proof}

Finally, we show that $p^{\boxplus_d t}$ has real roots for $t$ large enough. This is a partial analog of a result of Nica and Speicher \cite{NS} where they prove that every probability measure on the line belongs to a $\boxplus$-partial semi-group $\{ \mu_t : t \geq 1\}$ relative to additive free convolution (i.e., $ \mu_{t+s}=\mu_t \boxplus \mu_s$ for $t, s\geq1$). Our proof resembles more the arguments in \cite{BV}.

\begin{prop}\label{large t}
Let $p\neq x^d$ be a real polynomial, then there exists $T>0$ such that for all $t>T$ the polynomial $p^{\boxplus_d t}$ has $d$ different real roots. 
\end{prop}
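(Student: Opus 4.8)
The plan is to dilate $p^{\boxplus_d t}$ so that, as $t\to\infty$, it converges coefficientwise to the Hermite-type polynomial produced by the Central Limit Theorem, and then to transfer reality and simplicity of the roots from that limit. First I would normalize: translating $p$ by $c\in\R$ is the same as taking finite free convolution with $(x-c)^d$ (whose only nonzero finite free cumulant is $\kappa_1=c$), and since $\kappa_n$ with $n\ge2$ is translation invariant, one gets $(p(\cdot-c))^{\boxplus_d t}(x)=p^{\boxplus_d t}(x-tc)$; thus translating $p$ merely translates the roots of $p^{\boxplus_d t}$, and we may assume $m_1(p)=\kappa_1(p)=0$. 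In this normalization the relevant hypothesis is $\sigma^2:=\kappa_2(p)=m_2(p)=\tfrac1d\sum_i r_i^2>0$, i.e.\ $p$ is not a point mass; note that for a genuine point mass $(x-c)^d$ one has $p^{\boxplus_d t}=(x-tc)^d$, which is not simple-rooted, so this case really must be excluded. Now set $\tilde p_t:=D_{1/\sqrt t}\,p^{\boxplus_d t}$. By homogeneity (property (2')) and the defining relation $\kappa_n(p^{\boxplus_d t})=t\kappa_n(p)$, its cumulants are $\kappa_n(\tilde p_t)=t^{\,1-n/2}\kappa_n(p)$, so $\kappa_1(\tilde p_t)=0$, $\kappa_2(\tilde p_t)=\sigma^2$ for all $t$, and $\kappa_n(\tilde p_t)\to0$ for every $n\ge3$.

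Second, I would pass to the limit. By the coefficient--cumulant formula (Proposition~\ref{aaacum}) each coefficient $a^{\tilde p_t}_n$ is a fixed polynomial in $\kappa_1(\tilde p_t),\dots,\kappa_n(\tilde p_t)$, so $a^{\tilde p_t}_n$ converges as $t\to\infty$ to the $n$-th coefficient of the polynomial $h$ with finite free cumulants $(0,\sigma^2,0,\dots,0)$. This $h$ is exactly the polynomial furnished by the Central Limit Theorem example with $\kappa_2=\sigma^2$: a dilation of a Hermite polynomial, with $a^h_{2k+1}=0$ and $a^h_{2k}=\frac{(d)_{2k}}{d^{k}}\frac{(-\sigma^2)^k}{2^kk!}$. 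Since Hermite polynomials have $d$ distinct real roots and $\sigma^2>0$, so does $h$. As all the $\tilde p_t$ and $h$ are real (because $t$ and the $\kappa_n(p)$ are real), monic and of degree $d$, coefficientwise convergence upgrades to $\tilde p_t\to h$ uniformly on every compact subset of $\R$.

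Third, I would run the elementary intermediate-value argument. Let $z_1<\dots<z_d$ be the roots of $h$ and choose interlacing points $y_0<z_1<y_1<\dots<z_d<y_d$, so that $h(y_0),\dots,h(y_d)$ are nonzero with strictly alternating signs; put $\varepsilon:=\min_j|h(y_j)|>0$. Pick $T$ so large that $\sup_{x\in[y_0,y_d]}|\tilde p_t(x)-h(x)|<\varepsilon$ whenever $t>T$. Then $\tilde p_t(y_j)$ has the same sign as $h(y_j)$ for every $j$, so $\tilde p_t$ changes sign on each of the $d$ pairwise disjoint intervals $(y_{j-1},y_j)$ and hence has a zero in each; since $\deg\tilde p_t=d$ these are all of its roots, so $\tilde p_t$ has $d$ distinct real roots for $t>T$. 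Because $p^{\boxplus_d t}=D_{\sqrt t}\tilde p_t$ only rescales the roots, and the initial translation only shifts them, $p^{\boxplus_d t}$ has $d$ distinct real roots for all $t>T$.

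The only genuinely substantive point is the identification and non-degeneracy of the limit: once one knows the rescaled ``semigroup'' converges to the real-rooted Hermite polynomial $h$ — which is where the hypothesis that $p$ is not a point mass ($\sigma^2>0$) and the Central Limit Theorem computation enter — the rest is the standard continuity-of-roots / sign-change argument. This parallels the rescaling strategy used for free convolution powers, in the spirit of \cite{BV}.
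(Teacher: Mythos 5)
Your argument is correct and follows essentially the same route as the paper: center, rescale by $D_{1/\sqrt{t}}$, note that the cumulants of $D_{1/\sqrt{t}}\,p^{\boxplus_d t}$ converge to those of the Hermite-type Central Limit polynomial (the CLT computation never requires $t$ to be an integer), deduce convergence of coefficients, and conclude by continuity of roots --- your interlacing/sign-change step simply makes the paper's ``by continuity'' explicit. Your remark that one must exclude every point mass $(x-c)^d$, not only $x^d$, so that after centering $\kappa_2(p)>0$, is a legitimate sharpening of the hypothesis that the paper's normalization ``we may assume $\kappa_1(p)=0$, $\kappa_2(p)=1$'' implicitly uses.
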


\begin{proof}
 By dilating and centering we may assume that $\kappa_1(p)=0$ and $\kappa_2(p)=1$. Now for any $t$ the polynomial  $D_{1/\sqrt{t}} (p^{\boxplus_d t})$ has $d$ different real roots if and only $p^{\boxplus_d t}$ has them.  In the proof of the central limit theorem we never used that $N$ was an integer and thus
$$D_{1/\sqrt{t}} (p^{\boxplus_d t})\to d^{-d/2} H_m(\sqrt{d}x).$$

Now, convergence of the cumulants is equivalent to convergence of the coefficients. Since, $d^{-d/2} H_m(\sqrt{d}x)$, has $d$ \emph{different} real roots, by continuity, $D_{1/\sqrt{t}} (p^{\boxplus_d t})$ has also $d$ different real roots  for $t$ large enough. 
\end{proof}

\begin{rem}
One may wonder if it is true that if a polynomial $p$ has real roots then also $p^{\boxplus_d t}$ has for all $t>0$ real roots. This is not true, an explicit example being a Poisson with parameter $1/4$ and degree $4$.  Indeed, $Poiss(1/4,4)=z^4-z^3$ has roots $\{0,0,0,1\}$ while the polynomial $Poiss(1/4,4)^{\boxplus_4 4/3}=z^4-(4/3) z^3+z^2/6+z/54+5/2592$ has roots $\{0.250561, 1.17721,-0.0472193 - 0.0656519 i, -0.0472193 + 0.0656519 i  \}.$

\end{rem}

As a final remark we observe that Cramer's theorem does no hold in the finite free case.

\begin{prop}(Failure of Cramer's Theorem) There exist polynomials $p^+$ and $p^-$ which are not a dilation of $d^{-d/2} H_m(\sqrt{d}x)$ such that $p^+\boxplus_d p^-= d^{-d/2} H_m(\sqrt{d}x)$.
\end{prop}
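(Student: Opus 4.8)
The plan is to use finite free cumulants to construct an explicit counterexample to Cramér's theorem. Working in the language of cumulants, the target polynomial $H := d^{-d/2} H_d(\sqrt{d}x)$ is the one whose only nonzero finite free cumulant is $\kappa_2 = 1$ (all others vanish), by the characterization obtained in our Central Limit Theorem computation. By additivity of the cumulants (Proposition on additivity of $\kappa$), if we want $p^+ \boxplus_d p^- = H$, it suffices to choose cumulant sequences $(\kappa_n^+)_{n=1}^d$ and $(\kappa_n^-)_{n=1}^d$ with $\kappa_n^+ + \kappa_n^- = 0$ for $n \neq 2$ and $\kappa_2^+ + \kappa_2^- = 1$, and then check that the resulting polynomials (reconstructed via the coefficient-cumulant formula \eqref{aaatercum}) have only real roots — so that they are genuine characteristic polynomials of Hermitian matrices, hence legitimate elements of the monic-polynomials-of-degree-$d$ setting.

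The concrete choice I would make is to take $\kappa^\pm = (0, 1, \pm c, 0, 0, \dots, 0)$ for a small real parameter $c \neq 0$: i.e. $\kappa_1^\pm = 0$, $\kappa_2^\pm = 1$, $\kappa_3^\pm = \pm c$, and all higher cumulants zero. Then $\kappa_n^+ + \kappa_n^- = (0,2,0,0,\dots)$; after a global dilation by $1/\sqrt 2$ (which scales $\kappa_n$ by $2^{-n/2}$ and does not affect reality of roots), the convolution becomes exactly $H$. Neither $p^+$ nor $p^-$ is a dilation of $H$ since their third cumulant is nonzero while every dilation $D_\lambda H$ has vanishing third cumulant (by property (2') applied to $H$). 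It remains to verify that for $c$ small enough, the polynomial $p^+$ (equivalently $p^-$, by the symmetry $c \mapsto -c$ together with the substitution $x \mapsto -x$, which sends $\kappa_n$ to $(-1)^n \kappa_n$) has $d$ distinct real roots.

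For the reality of roots I would argue by perturbation/continuity, exactly as in the proof of Proposition \ref{large t}: at $c = 0$ the polynomial $p^+$ is precisely $H = d^{-d/2}H_d(\sqrt d x)$, which has $d$ distinct real roots; the coefficients of $p^+$ depend polynomially (hence continuously) on $c$ via \eqref{aaatercum}; therefore for all sufficiently small $|c| > 0$ the polynomial $p^+$ still has $d$ distinct (in particular, all real) roots. This exhibits the desired $p^\pm$.

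The main obstacle is a bookkeeping one rather than a conceptual one: one must make sure that the cumulant sequences chosen actually correspond to monic real-rooted polynomials of degree $d$ (so that the statement lives in the correct category), and that $p^\pm$ are not dilations of $H$ — both of which are handled above, the first by the continuity argument and the second by comparing third cumulants. A secondary point worth a sentence is that the indexing in Definition \ref{defcumfin} only assigns $\kappa_1,\dots,\kappa_d$, so the recipe ``set all higher cumulants to zero'' is automatic and one only needs $d \geq 3$ for $\kappa_3$ to be available; the degenerate cases $d = 1, 2$ (where the statement is vacuous or forces $p^\pm$ to be dilations of $H$) should be excluded or noted explicitly.
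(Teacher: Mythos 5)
Your proposal is correct and is essentially the paper's own proof: the paper likewise takes cumulant data $\kappa_1=0$, $\kappa_2=1$, $\kappa_3=\pm\epsilon$, all higher cumulants zero, uses additivity of finite free cumulants so the third cumulants cancel under $\boxplus_d$, and argues by continuity in $\epsilon$ that the reconstructed polynomials are real-rooted for $\epsilon$ small, the convolution being a dilation of $d^{-d/2}H_d(\sqrt{d}x)$. If anything, your version is slightly more careful than the paper's about the normalizing dilation ($D_{1/\sqrt{2}}$) and about verifying, via the nonvanishing third cumulant and homogeneity, that $p^{\pm}$ are not dilations of the Hermite polynomial.
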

\begin{proof}
 Consider, for $\epsilon>0$, the real polynomials $p^+_\epsilon$ and $p^-_\epsilon$ (possibly with complex roots) such that for $\delta\in\{+,-\}$ the cumulants of $p^\delta$ are given by  $\kappa_1(p_\epsilon^{\delta})=0, \kappa_2(p_\epsilon^\delta)=1,\kappa_3(p^\delta)=\delta\epsilon$ and  $\kappa_n(p)=0$, for $n>0$. By continuity, since  $p^d_\epsilon$ approximate $d^{-d/2} H_m(\sqrt{d}x)$ as $\epsilon\to 0$, there exists $\epsilon>0$ such that $p^+_\epsilon$ and $p^-_\epsilon$ both have reals roots.  By construction, $D_{1/2}p^+_\epsilon\boxplus_dD_{1/2}p^+_\epsilon=p^+_\epsilon\boxplus_d p^-_\epsilon= d^{-d/2} H_m(\sqrt{d}x).$
\end{proof}

\textbf{Acknowledgment} We thank Drew Armstrong for fruitful discussions on the paper and, in particular, for pointing out Lemma $4.4$. The authors were supported by CONACYT Grant 222668 during the writing of this paper.

\end{document}